\newtheorem{theorem}{Theorem}[section]
\newtheorem{definition}{Definition}[section]
\newtheorem{proposition}{Proposition}[section]
\newtheorem{lemma}{Lemma}[section]
\newtheorem{remark}{Remark}[section]
\newtheorem{example}{Example}[section]
 \newcommand{\tprod}{\ensuremath{  *     }}   
\newcommand{\unfold}[1]{\ensuremath{ {\rm unfold}\left(  #1 \right)  }} 
\newcommand{\fold}[1]{\ensuremath{ {\rm fold}\left(  #1 \right)  }} 
\newcommand{\bcirc}[1]{\ensuremath{ {\rm bcirc}\left(  #1 \right)  }} 
\newcommand{\vcirc}[1]{\ensuremath{ {\rm circ}\left(  #1 \right)  }} 
\newcommand{\spec}[1]{\ensuremath{ {\rm spec}\left(  #1 \right)  }} 
\newcommand{\Tnnp}{\ensuremath{ \mathbb R^{n\times n \times p}   }} 
\newcommand{\Kp}{\ensuremath{  \mathbb K_p    }} 
\newcommand{\Kpp}{\ensuremath{  \mathbb K_{p+}    }} 
\newcommand{\Kppp}{\ensuremath{  \mathbb K_{p++}    }} 
\newcommand{\Kpppp}{\ensuremath{  \mathbb K_{p+++}    }} 
\newcommand{\Knp}{\ensuremath{ \mathbb K^n_p  }} 
\newcommand{\Knpp}{\ensuremath{ \mathbb K^n_{p+}  }} 
\newcommand{\Knppp}{\ensuremath{ \mathbb K^n_{p++}  }} 
\newcommand{\Knpppp}{\ensuremath{ \mathbb K^n_{p+++}  }} 
\newcommand{\Knnpppp}{\ensuremath{ \mathbb K^{n\times n}_{p+++}  }} 
\newcommand{\Knnppp}{\ensuremath{ \mathbb K^{n\times n}_{p++}  }} 
\newcommand{\Knnpp}{\ensuremath{ \mathbb K^{n\times n}_{p+}  }} 
\newcommand{\Knnp}{\ensuremath{ \mathbb K^{n\times n}_{p}  }} 
\newcommand{\Identity}{\ensuremath{ \mathcal I_{nnp}  }} 
    \newcommand{\bigxiaokuohao}[1]{\ensuremath{ \left(  #1 \right) }}      
    \newcommand{\bigjueduizhi}[1]{\ensuremath{ \left|  #1 \right| }}   
    \newcommand{\bigdakuohao}[1]{\ensuremath{ \left\{  #1 \right\} }}         
    \newcommand{\bigzhongkuohao}[1]{\ensuremath{ \left[   #1 \right] }}
	\definecolor{darkgray}{rgb}{0.66, 0.66, 0.66}
\title{A Study on Nonnegative Tubal Matrices
}
\author{  Yuning Yang\thanks{College of Mathematics and Information Science, Guangxi University, Nanning, 530004, China.}
	\thanks{Corresponding author: Yuning Yang. Email: yyang@gxu.edu.cn}        \and Junwei Zhang$^*$                        
}
\begin{document} 
\maketitle

\begin{abstract}
	
Tubal scalars are   usual vectors, and tubal matrices are matrices with every element being a tubal scalar. Such a matrix is often recognized as a third-order tensor. The product between tubal scalars, tubal vectors, and tubal matrices can be done by the powerful t-product. 
In this paper, 	we define nonnegative/positive/strongly positive tubal scalars/vectors/matrices, and establish several properties that are analogous to their matrix counterparts. 
 In particular, we introduce the irreducible tubal matrix, and provide two equivalent characterizations. Then, the celebrated Perron-Frobenius theorem is established on the nonnegative irreducible tubal matrices.  
We show that some conclusions of the PF theorem for nonnegative irreducible matrices can be generalized to the tubal matrix setting, while some are not. One reason is the defined positivity here has a different meaning to its usual sense.  For those conclusions that can not be extended, weaker conclusions are proved. We also show that, if the nonnegative irreducible tubal matrix contains a strongly positive tubal scalar, then most conclusions of the matrix PF theorem hold.

\noindent {\bf Keywords:}  tensor; tubal matrix;   eigenvalue;  t-product; Perron-Frobenius theorem
\end{abstract}

\noindent {\bf  AMS subject classifications.}   15A18, 15A69
\hspace{2mm}\vspace{3mm}

\section{Introduction}

The celebrated Perron-Frobenius theorem is a fundamental result in the nonnegative matrix theory. 
   Due to the study of higher-order tensors, the PF theorem was extended to the largest H-eigenvalue of a  nonnegative irreducible tensor first by Lim \cite{Lim2005}, and later the result was strengthened  by Chang, Pearson, and Zhang \cite{cpz2008}, which is quite close to its matrix counterpart.  Friedland, Gaubert, and Han extended the PF theorem to nonnegative multilinear forms \cite{fgh2010}. Chang, Pearson, and Zhang generalized the PF theorem to the Z-eigenvalue setting \cite{cpz2013}. 
    Besides, 
 a variety of research was focused on studying properties of nonnegative tensors, see e.g., \cite{cpz2010,hhq2011,yy2010}.

 A tubal scalar is a usual vector, a tubal vector is a vector whose every element is a tubal scalar, and a tubal matrix is a matrix whose every element is also a tubal scalar. Such a matrix is often recognized as a third-order tensor. In \cite{kilmer2011factorization,braman2010third,kilmer2013third}, Kilmer, Martin, Braman, and their coauthors introduced the t-prodct between third-order tensors (tubal matrices), enabling one to study the  properties of third-order tensors, such as transpose, inverse, orthogonality, range and kernel spaces, and SVD, as their matrix counterpart. Further properties are studied then. For example, Miao, Qi, and Wei introduced the t-eigenvalue, t-Jordan form, and t-Drazin inverse \cite{miao2021t}.  The tensor t-functions were studied in \cite{miao2020generalized,lund2020tensor}.  The t-positive (semi)definiteness tensors and related properties were investigated by Zheng, Huang, and Wang \cite{zheng2021t}. The    t-eigenvalue and t-eigenvector were studied by Liu and Jin \cite{liu2020study}. The SVD was deeply investigated for tubal matrices by Qi and Luo \cite{qi2021tubal}, where the definition of tubal matrices there is   more general than the current setting.
 
 In this paper, we still use the notion of a tubal matrix to represent a third-order tensor for convenience.
 We first define nonnegative/positive/strongly positive tubal scalars/vectors/matrices. The meaning of some of these notions is different from the usual sense. For instance, a tubal matrix with one frontal slice being a positive matrix while the other frontal slices being zero matrices is called positive in the current definition, while such a tubal matrix is only recognized as a nonnegative third-order tensor before. Using the t-product, we derive several properties of nonnegative tubal matrices. In particular, the concept of reducibility/irreducibility is extended to the tubal matrix case. In our definition, even a tubal matrix is irreducible, its block curculant representation can be reducible. Such a definition is also   different from that in \cite{cpz2008} for tensors. As will be shown in Proposition \ref{prop:two_def_irreducibility} and Remark \ref{rmk:irreducible_wider}, in the third-order tensor case, our definition of irreducibility covers a wider range of tensors than that of \cite{cpz2008}. 
  Similar to the matrix case, equivalent characterizations of the reducibility/irreducibility based on permutations or tensor powers are provided. We then study the PF theorem for nonnegative irreducible tubal matrices. Some conclusions of the matrix PF theorem can be generalized, while some are not. For those that can not be extended, we prove weaker results. We also prove that, if the nonnegative irreducible tubal matrix contains a strongly positive tubal scalar, then most conclusions of the matrix PF theorem hold. 

The rest is organized as follows. Sect. \ref{sec:pre} introduce preliminaries concerning the t-product. Nonnegative tubal scalars/vectors/matrices and irreducible tubal matrices are defined in Sect. \ref{sec:nonnegative}, with properties given. Sect. \ref{sec:pf_thm} studies the PF theorem for nonnegative irreducible tubal matrices.

\section{Preliminaries}\label{sec:pre}

The notations and definitions throughout this paper mainly follow those of \cite{kilmer2011factorization,braman2010third,kilmer2013third}. 

 The tensors considered are in the field $\Tnnp$. We call such a type of tensors square, namely, its first and second modes have equal dimension. 
 The $i$-th frontal slice of $\mathcal A\in\Tnnp$ is denoted as $\mathcal A^{(i)} 
 \in\mathbb R^{n\times n\times 1}$, the $j$-th  horizontal slice is denoted as $\mathcal A_{j,} \in\mathbb R^{1\times n\times p}$, and the $k$-th lateral slice is denoted as $ \mathcal  A_{,k}\in\mathbb R^{n\times 1\times p}$. $\mathbf a_{j,k} \in\mathbb R^{1\times 1\times p}$ represents the $j,k$-th tube of $\mathcal A$ and $\mathbf a^{(i)}_{j,k}$ is the $i$-th entry (frontal slice) of $\mathbf a_{j,k}$.
  In Matlab, the above notations are respectively given as $\mathcal A^{(i)} = \mathcal A(:,:,i), \mathcal A_{j,} = \mathcal A(j,:,:). \mathcal A_{,k} = \mathcal A(:,k,:)$, and $\mathbf a_{j,k} = \mathcal A(j,k,:)$;
 
An element $\mathbf a\in\mathbb R^{1\times 1\times p}$ is recognized as a tubal scalar of length $p$. The space of all such tubal scalars is denoted as $\Kp$. Similarly, an element $A = \bigxiaokuohao{\mathbf a_j}\in\mathbb R^{n\times 1\times p}$ is called a tubal vector consisting of $n$ tubal scalars. The space of all such tubal vectors is denoted as $\Knp$. Likewise, an element $\mathcal A = \bigxiaokuohao{\mathbf a_{j,k}}\in \Tnnp$ can be seen as a tubal matrix consisting of $n$ tubal vectors. We denote by $\Knnp=\Tnnp$. For tubal scalars, vectors, and matrices, the superscript $\cdot^{(i)}$ means the $i$-th frontal slice of this scalar, vector, or matrix, respectively. 

Let $\mathbf a \in\Kp$. Define
\begin{align}
	\label{def:circ_unfold}
	\vcirc{\mathbf a}:= \left[\begin{matrix}
	\mathbf	a^{(1)} &\mathbf a^{(p)} &\mathbf a^{(p-1)} &\cdots &\mathbf a^{(2)} \\
	\mathbf	a^{(2)} &\mathbf a^{(1)} &\mathbf a^{(p)} & \cdots &\mathbf a^{(3)} \\
		\vdots & \ddots & \ddots & \ddots &\vdots \\
	\mathbf	a^{(p)} &\mathbf a^{(p-1)} & \cdots & \mathbf a^{(2)} & \mathbf a^{(1)}
	\end{matrix}\right].
\end{align}
Given $\mathbf a,\mathbf b\in \Kp$,   the multiplication $*$  between $\mathbf a$ and $\mathbf b$ is defined as \cite{braman2010third}
\[
\mathbf a*\mathbf b := \vcirc{\mathbf a}\mathbf b = \left[\begin{matrix}
	\mathbf	a^{(1)} &\mathbf a^{(p)} &\mathbf a^{(p-1)} &\cdots &\mathbf a^{(2)} \\
	\mathbf	a^{(2)} &\mathbf a^{(1)} &\mathbf a^{(p)} & \cdots &\mathbf a^{(3)} \\
	\vdots & \ddots & \ddots & \ddots &\vdots \\
	\mathbf	a^{(p)} &\mathbf a^{(p-1)} & \cdots & \mathbf a^{(2)} & \mathbf a^{(1)}
\end{matrix}\right]  \left[\begin{matrix}
\mathbf	b^{(1)}  \\
\mathbf	b^{(2)}  \\
\vdots   \\
\mathbf	b^{(p)}  
\end{matrix}\right] .
\]
It follows from \cite{braman2010third} that $(\Kp, +,*)$ is a commutative ring with unity, where $+$ is the usual addition. The unity is $\mathbf e$, the tubal scalar whose first entry is $1$ while the other ones are zero. For any $\mathbf a\in \Kp$, $\mathbf e\tprod\mathbf a = \mathbf a$.   Let $\boldsymbol{0}$ denote the zero tubal scalar, i.e., every entry of $\boldsymbol{0}$ is zero. Then for any $\mathbf a\in \Kp$, $\boldsymbol{0}\tprod\mathbf a = \boldsymbol{0}$.

Let $\mathcal A\in \Knnp$. Define the following notations:
\begin{align}
	\label{def:bcirc_unfold}
	\bcirc{\mathcal A}:= \left[\begin{matrix}
	\mathcal 	A^{(1)} & \mathcal A^{(p)} &\mathcal A^{(p-1)} &\cdots & \mathcal A^{(2)} \\
	\mathcal	A^{(2)} &\mathcal A^{(1)} & \mathcal A^{(p)} & \cdots &\mathcal A^{(3)} \\
		\vdots & \ddots & \ddots & \ddots &\vdots \\
	\mathcal	A^{(p)} & \mathcal A^{(p-1)} & \cdots &\mathcal A^{(2)} & \mathcal A^{(1)},
	\end{matrix}\right], 
	\unfold{\mathcal A} = \left[ \begin{matrix}
		A^{(1)}\\
		A^{(2)}\\
		\vdots\\
		A^{(p)}
	\end{matrix}\right].
\end{align}
The fold operator satisfies
\[
\fold{\unfold{\mathcal A}} = \mathcal A. 
\]
Given $\mathcal A,\mathcal B\in \Knnp$,   the t-product $*$ between $\mathcal A$ and $\mathcal B$ is defined as \cite{kilmer2011factorization}
\[
\mathcal A*\mathcal B := \fold{ \bcirc{\mathcal A}\unfold{\mathcal B}  }. 
\]
The t-product can be implemented fast by using the fast Fourier transform \cite{kilmer2011factorization}. 

Similar to the usual product between matrices, the $j$-th horizontal slice of $\mathcal A*\mathcal B$ is equal to the t-product between the $j$-th horizontal slice of $\mathcal A$ and $\mathcal B$:
\begin{align*}\label{eq:t_prod_horizontal_tensor}
\bigxiaokuohao{ \mathcal A*\mathcal B  }_{j,} = \mathcal A_{j,}*\mathcal B.
\end{align*}
Correspondingly, for $A\in\mathbb R^{1\times n\times p} $ and $B\in\mathbb R^{n\times 1\times p}$, which are respectively represented as $A=[\mathbf a_1,\ldots,\mathbf a_n ]$ and
\[
B =  \left[\begin{matrix}
	\mathbf b_1\\
	\vdots\\
	\mathbf b_n
\end{matrix} \right],
\] where $\mathbf a_i,\mathbf b_i\in \Kp$,  there holds
\begin{align*}
	\label{eq:t_prod_tubal_vectors}
	A*B = \sum^n_{i=1}\nolimits \mathbf a_i*\mathbf b_i. 
\end{align*}

For $\mathcal A_1,\mathcal A_2\in \Knnp$, if they are partitioned as
\[
\mathcal A_1 = \bigzhongkuohao{\begin{matrix}
		\mathcal B & \mathcal C\\
		\mathcal D & \mathcal E\\
\end{matrix}},~\mathcal A_2 = \bigzhongkuohao{\begin{matrix}
\mathcal F & \mathcal G\\
\mathcal H & \mathcal I
\end{matrix}},
\]
where $\mathcal B,\mathcal F\in\mathbb K_p^{n_1\times n_1}$, $\mathcal E,\mathcal I\in \mathbb K_p^{n_2\times n_2}$, $\mathcal C,\mathcal G \in \mathbb K_p^{n_1\times n_2}$, and $\mathcal D,\mathcal H\in \mathbb K_p^{n_2\times n_1}$, with $n_1+n_2=n$, then \cite{miao2020generalized}
\[
\mathcal A_1\tprod\mathcal A_2 = \bigzhongkuohao{\begin{matrix}
		\mathcal B\tprod\mathcal F + \mathcal C\tprod\mathcal H & \mathcal B\tprod\mathcal G + \mathcal C\tprod\mathcal I\\
		\mathcal D\tprod\mathcal F + \mathcal E\tprod\mathcal H & \mathcal D\tprod\mathcal G + \mathcal E\tprod\mathcal I
\end{matrix}}. 
\]
In particular, for $\mathcal A = (\mathbf a_{i,j})\in \Knnp$ and $X=(\mathbf x_i)\in \Knp$,
\[
\bigxiaokuohao{\mathcal A\tprod X}_i = \sum^n_{j=1}\nolimits\mathbf a_{i,j}\tprod\mathbf x_j.
\]
 
The transpose of $\mathbf a\in \Kp$ ($A\in \Knp$, or  $\mathcal A\in\Knnp$) is given by transposing each of the frontal slices and then reversing the order of transposed frontal slicers $2$ through $p$, namely,
\begin{align*}
	\mathbf a^{\top}  :=  \fold{ \left[\begin{matrix}
			\mathbf a^{(1)}\\
			\mathbf a^{(p)} \\
			\vdots\\
			\mathbf  a^{(2)}
		\end{matrix} \right]  },
	A^{\top}  := \fold{ \left[\begin{matrix}
		(A^{(1)})^{\top}\\
		(A^{(p)})^{\top} \\
		\vdots\\
		(A^{(2)})^{\top}
	\end{matrix} \right]  },	
	\mathcal A^{\top}  := \fold{ \left[\begin{matrix}
		(\mathcal A^{(1)})^{\top}\\
		(\mathcal A^{(p)})^{\top} \\
		\vdots\\
		(\mathcal A^{(2)})^{\top}
	\end{matrix} \right]  }.
\end{align*}
They are respectively of sizer $1\times 1\times p$, $1\times n\times p$, and $n\times n\times p$. 
There holds the relation:
\begin{align}
	\label{prop:transpose}
	\bcirc{\mathcal A^{\top}} = \bcirc{\mathcal A}^{\top}.
\end{align}
If $\mathcal A,\mathcal B\in \Knnp$, then $\bigxiaokuohao{\mathcal A\tprod\mathcal B}^{\top} = \mathcal B^{\top}\tprod\mathcal A^{\top}$.

The identity tensor (tubal matrix)  in $\Knnp$ is the tensor whose first frontal slice is the identity matrix, and whose other frontal slices are all zeros. For any $\mathcal A\in \Knnp$, $\Identity\tprod\mathcal A=\mathcal A\tprod\Identity = \mathcal A$.

The permutation tensor (tubal matrix) in our context is slightly different from \cite{kilmer2011factorization}. We call $\mathcal P\in \Knnp$ a permutation tensor, if its first frontal slice is a permutation matrix, and other frontal matrices are all zeros. For any $\mathcal A\in \Knnp$, it holds that
\begin{align}
	\label{def:permutation_tensor}
	\mathcal P\tprod	\mathcal A  \tprod\mathcal P^{\top}:= \fold{ \left[\begin{matrix}
		\mathcal P^{(1)}	\mathcal A^{(1)} \bigxiaokuohao{\mathcal P^{(1)}}^{\top}   \\
		\mathcal P^{(1)}		\mathcal A^{(2)}  \bigxiaokuohao{\mathcal P^{(1)}}^{\top}  \\
			\vdots\\
		\mathcal P^{(1)}		\mathcal A^{(p)}  \bigxiaokuohao{\mathcal P^{(1)}}^{\top} 
		\end{matrix} \right]  }.
\end{align}
\begin{proposition}
	\label{prop:permutation_tensor}
	Let $\mathcal P\in \Knnp$ be a permutation tubal matrix. Then $\mathcal P^{\top}\tprod\mathcal P = \mathcal P\tprod\mathcal P^{\top}=\Identity$.
\end{proposition}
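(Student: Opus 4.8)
The plan is to push both products through the defining formula $\mathcal A\tprod\mathcal B = \fold{\bcirc{\mathcal A}\unfold{\mathcal B}}$ and reduce them to ordinary matrix multiplication of frontal slices, exploiting the fact that a permutation tubal matrix has only one nonzero frontal slice. First I would pin down $\mathcal P^{\top}$. By the definition of the transpose, $\bigxiaokuohao{\mathcal P^{\top}}^{(1)} = \bigxiaokuohao{\mathcal P^{(1)}}^{\top}$ and $\bigxiaokuohao{\mathcal P^{\top}}^{(i)} = \bigxiaokuohao{\mathcal P^{(p+2-i)}}^{\top}$ for $i = 2,\dots,p$. Since $\mathcal P^{(1)}$ is a permutation matrix while $\mathcal P^{(i)} = 0$ for every $i\geq 2$, the reversal of slices $2$ through $p$ merely permutes zero matrices; hence $\mathcal P^{\top}$ is again a tubal matrix whose sole nonzero frontal slice is $\bigxiaokuohao{\mathcal P^{(1)}}^{\top}$.

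Next I would record the block structure. Because all frontal slices of $\mathcal P$ except the first vanish, every off-diagonal block in \eqref{def:bcirc_unfold} is zero, so $\bcirc{\mathcal P}$ is block diagonal with $p$ copies of $\mathcal P^{(1)}$ on the diagonal; by the same reasoning $\bcirc{\mathcal P^{\top}}$ is block diagonal with $\bigxiaokuohao{\mathcal P^{(1)}}^{\top}$ on the diagonal (consistent with \eqref{prop:transpose}). Moreover $\unfold{\mathcal P}$ and $\unfold{\mathcal P^{\top}}$ are block columns whose top block is $\mathcal P^{(1)}$, respectively $\bigxiaokuohao{\mathcal P^{(1)}}^{\top}$, and whose remaining blocks are zero.

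Now I would compute directly. Multiplying the block-diagonal $\bcirc{\mathcal P^{\top}}$ into $\unfold{\mathcal P}$ leaves only the top block alive, equal to $\bigxiaokuohao{\mathcal P^{(1)}}^{\top}\mathcal P^{(1)}$, with all lower blocks zero. Since $\mathcal P^{(1)}$ is a permutation matrix, $\bigxiaokuohao{\mathcal P^{(1)}}^{\top}\mathcal P^{(1)} = I_n$, so applying $\fold{\cdot}$ produces precisely the tensor whose first frontal slice is $I_n$ and whose other slices are zero, i.e. $\Identity$. The product $\mathcal P\tprod\mathcal P^{\top}$ is handled identically, now using $\mathcal P^{(1)}\bigxiaokuohao{\mathcal P^{(1)}}^{\top} = I_n$. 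I do not anticipate a genuine obstacle: the only delicate point is the bookkeeping in the transpose definition, namely checking that reversing frontal slices $2$ through $p$ can neither displace the nonzero first slice nor manufacture a nonzero higher slice, after which everything collapses to the elementary identity $P^{\top}P = PP^{\top} = I_n$ for a permutation matrix $P$.
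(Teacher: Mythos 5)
Your argument is correct. The paper states this proposition without proof, treating it as immediate from the definitions; your computation — observing that $\mathcal P^{\top}$ again has its only nonzero frontal slice $\bigxiaokuohao{\mathcal P^{(1)}}^{\top}$ in the first position, that $\bcirc{\mathcal P}$ is block diagonal with $p$ copies of $\mathcal P^{(1)}$, and that the t-product therefore collapses to $\bigxiaokuohao{\mathcal P^{(1)}}^{\top}\mathcal P^{(1)} = \mathcal P^{(1)}\bigxiaokuohao{\mathcal P^{(1)}}^{\top} = I_n$ in the top block — is exactly the verification the authors leave implicit, and every step checks out.
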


The following proposition shows that the permutation tubal matrix acting on a tubal vector performs similar to that in the matrix case.
\begin{proposition}
	\label{prop:permutation_tensor_on_tubal_vector}
	Let $X=(\mathbf x_i)\in\Knp$ with some  $\mathbf x_i=\boldsymbol{0}$. Then there exists a permutation tubal matrix $\mathcal P$, such that $\mathcal P\tprod X$ takes the form $\bigzhongkuohao{ \begin{matrix}
			X_1 \\
			\boldsymbol{0}
	\end{matrix}  },$
where $\boldsymbol{0}$ in this tubal vector denotes a zero tubal vector. 
\end{proposition}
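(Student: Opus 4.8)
The plan is to show that a permutation tubal matrix acts on a tubal vector by merely permuting its tubal-scalar entries, exactly as an ordinary permutation matrix permutes the entries of a vector; the result then follows by choosing the permutation that collects the nonzero tubal scalars at the top.

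First I would unwind the definition of the t-product to pin down the action of $\mathcal P$. Since the first frontal slice of $\mathcal P$ is a permutation matrix $P$ and all its remaining frontal slices vanish, the block circulant matrix $\bcirc{\mathcal P}$ is block diagonal, namely $\bcirc{\mathcal P}=I_p\otimes P$. Writing $\unfold{X}$ as the vertical stack of the frontal slices $X^{(1)},\ldots,X^{(p)}\in\mathbb R^{n\times 1}$, we get that $\bcirc{\mathcal P}\unfold{X}$ is the vertical stack of $PX^{(1)},\ldots,PX^{(p)}$. Hence, after folding, the $i$-th frontal slice of $\mathcal P\tprod X$ equals $PX^{(i)}$ for every $i=1,\ldots,p$.

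The key step is then to read off the tubal scalars. Let $\sigma$ be the permutation determined by $(Pv)_j=v_{\sigma(j)}$. Because the \emph{same} matrix $P$ is applied to every frontal slice, the $j$-th entry of the $i$-th slice of $\mathcal P\tprod X$ is $X^{(i)}_{\sigma(j)}$; collecting these over $i$ shows that the $j$-th tubal scalar of $\mathcal P\tprod X$ is precisely $\mathbf x_{\sigma(j)}$. Thus $\mathcal P\tprod X$ is obtained from $X$ by permuting its tubal scalars according to $\sigma$. This slice-wise identity is the only place where the structure of the permutation tubal matrix is genuinely used, and it is the crux of the argument: it is the simultaneity of the action across all frontal slices that makes the induced tubal-scalar permutation well defined.

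Finally, I would select the permutation. Let $S=\{j:\mathbf x_j\neq\boldsymbol 0\}$ and choose any $\sigma$ that maps $\{1,\ldots,|S|\}$ bijectively onto $S$ and the complementary indices onto those $j$ with $\mathbf x_j=\boldsymbol 0$. Taking $\mathcal P$ to be the permutation tubal matrix whose first frontal slice realizes $\sigma$, the action derived above places all nonzero tubal scalars in the top block and all zero tubal scalars in the bottom block, giving $\mathcal P\tprod X=\bigzhongkuohao{\begin{matrix} X_1\\ \boldsymbol 0\end{matrix}}$ as claimed. No real difficulty arises at this stage; the substance of the proposition lies entirely in the slice-wise computation above.
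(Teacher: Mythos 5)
Your proof is correct and follows essentially the same route as the paper: both construct $\mathcal P$ with first frontal slice a suitable permutation matrix $P$ and all other slices zero, and both rest on the observation that such a $\mathcal P$ applies the same $P$ to every frontal slice of $X$ simultaneously, so the zero tubal scalars (which vanish in every slice) can be moved to the bottom. Your explicit computation of $\bcirc{\mathcal P}=I_p\otimes P$ just makes precise the step the paper states as immediate.
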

\begin{proof}
	The assumption shows that every frontal slice $X^{(i)} \in \mathbb R^n$ of $X$ contains zero entries, some of which appear at the same positions. Then there exists a permutation matrix $P\in\mathbb R^{n\times n}$, such that 
	$PX^{(i)} = \bigzhongkuohao{\begin{matrix}
			X^{(i)}_1\\
			\boldsymbol{0}
	\end{matrix}} \in \mathbb R^n$, $i=1,\ldots,p$, where $\boldsymbol{0}$ here means a zero vector. Now, define $\mathcal P$ such that its first frontal   slice is $P$, and other frontal slices are zero matrices. The required result follows immediately.
\end{proof}

%
%
%
%

\section{Nonnegative Tubal Scalars, Vectors, and Matrices}\label{sec:nonnegative}
The modulus of $\mathbf a \in \mathbb C^{1\times 1\times p}$ is defined as $\|\mathbf a\| = \sqrt{\sum^p_{i=1} |\mathbf a^{(i)}|^2 }$. We call $\mathbf a=\boldsymbol{0}$ if $\|\mathbf a\|=0$. In the sequel, the meaning of $\boldsymbol{0}$ depends on the context: it can be a tubal scalar, vector, or matrix, every entry of which is zero. Its size also depends on the context. 
\begin{definition}[Nonnegative tubal scalars]
	\label{def:nonnegative_tubal_scalars}
	We call a tubal scalar $\mathbf a\in\Kp$ nonnegative, if every entry $\mathbf a^{(i)}$ are nonnegative; we call it positive, if its modulus is not zero; we call it strongly positive, if every entry $\mathbf a^{(k)} >0$. 
	
	The set of nonnegative/positive/strongly positive tubal scalars are respectively denoted as $\Kpp$, $\Kppp$, and $\Kpppp$. 
\end{definition}
\begin{remark}\label{rmk:nonnegative_tubal_scalar}
	Note that $\Kpp = \Kppp \cup \{\boldsymbol{0} \}$.  There holds the relation
	\[
	\Kpppp\subset \Kppp\subset\Kpp \subset \Kp. 
	\]
\end{remark}

\begin{proposition}
	\label{prop:t_prod_nonnegative_tubal_scalars}
	Let $\mathbf a,\mathbf b\in \Kp$. If $\mathbf a,\mathbf b\in\Kpp$, then $\mathbf a\tprod\mathbf b\in\Kpp$; if $\mathbf a,\mathbf b\in \Kppp$, then $\mathbf a\tprod\mathbf b\in \Kppp$; if one of $\mathbf a$ or $\mathbf b$ is strongly positive, while the other one is positive, then $\mathbf a\tprod\mathbf b \in \Kpppp$.
\end{proposition}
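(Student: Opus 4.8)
The plan is to reduce everything to the explicit entrywise formula for $\mathbf a\tprod\mathbf b$ together with the single observation that nonnegativity rules out cancellation. From the definition $\mathbf a\tprod\mathbf b = \vcirc{\mathbf a}\mathbf b$ and the circulant pattern in \eqref{def:circ_unfold}, reading off row $k$ of $\vcirc{\mathbf a}$ gives
\[
\bigxiaokuohao{\mathbf a\tprod\mathbf b}^{(k)} = \sum_{j=1}^{p} \mathbf a^{(\,(k-j)\bmod p\,+1)}\,\mathbf b^{(j)}, \qquad k=1,\ldots,p,
\]
which is the circular convolution of the two length-$p$ vectors. The crucial structural remark is that each summand is a product of two real entries, so when $\mathbf a,\mathbf b\in\Kpp$ every summand is nonnegative and no term can cancel another. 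All three assertions then reduce to exhibiting, for the relevant entries, a single strictly positive summand.

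For the first assertion I would simply note that each $\bigxiaokuohao{\mathbf a\tprod\mathbf b}^{(k)}$ is a finite sum of nonnegative numbers, hence nonnegative, so $\mathbf a\tprod\mathbf b\in\Kpp$. For the second, recall that $\mathbf a,\mathbf b\in\Kppp$ means nonnegative with nonzero modulus, so there are indices $i_0,j_0$ with $\mathbf a^{(i_0)}>0$ and $\mathbf b^{(j_0)}>0$. Choosing $k_0$ with $k_0\equiv i_0+j_0-1\pmod p$ makes the $j_0$-th summand of $\bigxiaokuohao{\mathbf a\tprod\mathbf b}^{(k_0)}$ equal to $\mathbf a^{(i_0)}\mathbf b^{(j_0)}>0$; since the remaining summands are $\ge 0$, that entry is strictly positive, whence $\bignorm{\mathbf a\tprod\mathbf b}>0$ and $\mathbf a\tprod\mathbf b\in\Kppp$.

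For the third assertion, suppose without loss of generality that $\mathbf a\in\Kpppp$ and $\mathbf b\in\Kppp$, and fix $j_0$ with $\mathbf b^{(j_0)}>0$. For every $k$ the $j_0$-th summand of $\bigxiaokuohao{\mathbf a\tprod\mathbf b}^{(k)}$ equals $\mathbf a^{(\,(k-j_0)\bmod p\,+1)}\mathbf b^{(j_0)}$, which is strictly positive because the $\mathbf a$-factor is positive by strong positivity; as the other summands are nonnegative, every entry is strictly positive, i.e. $\mathbf a\tprod\mathbf b\in\Kpppp$. The symmetric case (with $\mathbf b$ strongly positive and $\mathbf a$ positive) follows from the commutativity of $(\Kp,+,\tprod)$ recorded earlier. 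The only thing requiring care is the $\bmod p$ bookkeeping in the convolution index—in particular, checking that for any target $k$ and any source $j_0$ the map $j_0\mapsto(k-j_0)\bmod p+1$ lands on a genuine entry of $\mathbf a$ in $\{1,\dots,p\}$—but this is routine, and I expect no deeper obstacle, since the whole argument rests on non-cancellation of nonnegative summands.
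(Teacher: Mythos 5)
Your argument is correct and is just the fully written-out version of what the paper does: the paper's proof is the one-line remark that the claims ``follow immediately from the definition of $\mathbf a\tprod\mathbf b$ and Definition \ref{def:nonnegative_tubal_scalars},'' and your circular-convolution formula with the non-cancellation observation is exactly that reasoning made explicit. The index bookkeeping and the use of commutativity for the third assertion both check out, so there is nothing to fix.
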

\begin{proof}
	The results follow immediately from the definition of $\mathbf a\tprod\mathbf b$ and Definition \ref{def:nonnegative_tubal_scalars}.
\end{proof}

Proposition \ref{prop:t_prod_nonnegative_tubal_scalars} and Remark \ref{rmk:nonnegative_tubal_scalar} immediately give  that:
\begin{proposition}
	\label{prop:t_Prod_positive_tubal_scalar_times_zero_scalar}
	Let $\mathbf a\in \Knpp$ and $\mathbf b\in \Knppp$. Then $\mathbf a\tprod\mathbf b = \boldsymbol{0}$ if and only if $\mathbf a = \boldsymbol{0}$. 
\end{proposition}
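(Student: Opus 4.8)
The plan is to derive this directly as a corollary of Proposition \ref{prop:t_prod_nonnegative_tubal_scalars} together with Remark \ref{rmk:nonnegative_tubal_scalar}, treating $\mathbf a$ as a nonnegative and $\mathbf b$ as a positive tubal scalar. The ``if'' direction needs no computation: since $(\Kp,+,\tprod)$ is a ring with zero element $\boldsymbol{0}$ satisfying $\boldsymbol{0}\tprod\mathbf b=\boldsymbol{0}$ for every $\mathbf b$, the assumption $\mathbf a=\boldsymbol{0}$ gives $\mathbf a\tprod\mathbf b=\boldsymbol{0}$ at once.

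For the ``only if'' direction I would argue by contraposition. Suppose $\mathbf a\neq\boldsymbol{0}$. By hypothesis $\mathbf a$ is nonnegative, and Remark \ref{rmk:nonnegative_tubal_scalar} records the dichotomy $\Kpp=\Kppp\cup\{\boldsymbol{0}\}$; hence a nonzero nonnegative tubal scalar is automatically positive, i.e.\ $\mathbf a\in\Kppp$. Since $\mathbf b$ is positive by assumption, both factors now lie in $\Kppp$, so Proposition \ref{prop:t_prod_nonnegative_tubal_scalars} yields $\mathbf a\tprod\mathbf b\in\Kppp$. By Definition \ref{def:nonnegative_tubal_scalars} a positive tubal scalar has nonzero modulus, and therefore $\mathbf a\tprod\mathbf b\neq\boldsymbol{0}$. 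This establishes the contrapositive and completes the argument.

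The one point that requires care --- and the reason the statement is not vacuous --- is that ``positive'' here means ``nonzero modulus'' rather than entrywise positivity (Definition \ref{def:nonnegative_tubal_scalars}). The whole argument hinges on the dichotomy $\Kpp=\Kppp\cup\{\boldsymbol{0}\}$, which converts the nonvanishing hypothesis $\mathbf a\neq\boldsymbol{0}$ into the membership $\mathbf a\in\Kppp$ needed to invoke Proposition \ref{prop:t_prod_nonnegative_tubal_scalars}. I expect no genuine obstacle beyond this bookkeeping: the substantive content already lives in the earlier proposition, where the closure of $\Kppp$ under $\tprod$ is checked from the circulant definition of the product, so here it remains only to splice that closure together with the membership statement of the remark.
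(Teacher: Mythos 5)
Your argument is correct and is exactly the route the paper takes: the paper states this proposition as an immediate consequence of Proposition \ref{prop:t_prod_nonnegative_tubal_scalars} and the dichotomy $\Kpp=\Kppp\cup\{\boldsymbol{0}\}$ from Remark \ref{rmk:nonnegative_tubal_scalar}, which is precisely the splice you carry out (you also, rightly, read the hypotheses as tubal scalars in $\Kpp$ and $\Kppp$ despite the statement's typographical $\Knpp$, $\Knppp$). No gaps.
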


\begin{definition}[Nonnegative tubal vectors]
	\label{def:nonnegative_tubal_vectors}
	We call a tubal vector $A\in\Knp$ nonnegative/positive/strongly positive, if every element of $A$ is nonnegative/positive/strongly positive.
	
	 The set of nonnegative/positive/strongly positive tubal vectors are respectively denoted as $\Knpp$, $\Knppp$, and $\Knpppp$. 
\end{definition}

From Proposition \ref{prop:t_prod_nonnegative_tubal_scalars} we have the following observation:
\begin{proposition}
	\label{prop:t_prod_positive_tubal_vector_times_nonnegative_tubal_vector}
	If $Y\in\Knppp$ and $X\in\Knpp$, $X\neq\boldsymbol{0}$, then $Y^{\top}\tprod X \in \Kppp$.
\end{proposition}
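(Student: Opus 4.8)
The plan is to reduce the statement to Proposition~\ref{prop:t_prod_nonnegative_tubal_scalars} by expanding the t-product as a sum of tubal-scalar products. Writing $Y=(\mathbf y_i)$ and $X=(\mathbf x_i)$ with $\mathbf y_i,\mathbf x_i\in\Kp$, the transposed tubal vector $Y^{\top}$ is the horizontal slice whose $i$-th entry is $\mathbf y_i^{\top}$, so the identity $A\tprod B=\sum_{i}\mathbf a_i\tprod\mathbf b_i$ for the t-product of a horizontal slice with a lateral slice gives
\[
Y^{\top}\tprod X=\sum_{i=1}^n \mathbf y_i^{\top}\tprod\mathbf x_i.
\]
First I would observe that tubal transposition only permutes the entries $2,\dots,p$ of a tubal scalar and fixes the first, so $\mathbf y_i^{\top}$ is strongly positive whenever $\mathbf y_i$ is; hence each $\mathbf y_i^{\top}\in\Kpppp$.

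Next I would classify the summands using Proposition~\ref{prop:t_prod_nonnegative_tubal_scalars}. Because $X\in\Knpp$, each $\mathbf x_i$ is nonnegative, so each term $\mathbf y_i^{\top}\tprod\mathbf x_i$ lies in $\Kpp$ and the whole sum is therefore nonnegative. For those indices $i$ with $\mathbf x_i=\boldsymbol{0}$ the term vanishes; for those with $\mathbf x_i\neq\boldsymbol{0}$, $\mathbf x_i$ is a positive tubal scalar, and since $\mathbf y_i^{\top}$ is strongly positive, Proposition~\ref{prop:t_prod_nonnegative_tubal_scalars} yields $\mathbf y_i^{\top}\tprod\mathbf x_i\in\Kpppp$, i.e.\ a strongly positive scalar. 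The hypothesis $X\neq\boldsymbol{0}$ guarantees that at least one index $i_0$ has $\mathbf x_{i_0}\neq\boldsymbol{0}$, so at least one summand is strongly positive.

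Finally I would combine these facts: the sum is an entrywise addition of nonnegative tubal scalars in which the $i_0$-th term has every entry strictly positive, so every entry of $Y^{\top}\tprod X$ is strictly positive as well. This shows $Y^{\top}\tprod X\in\Kpppp\subset\Kppp$, which is the desired conclusion. The only point requiring care is the bookkeeping in the first step---confirming that the entries of the transposed vector $Y^{\top}$ are exactly the tubal transposes $\mathbf y_i^{\top}$ and that strong positivity survives transposition---after which the result follows immediately from Proposition~\ref{prop:t_prod_nonnegative_tubal_scalars} together with the nonnegativity of a sum of nonnegative tubal scalars. It is worth noting that the argument in fact delivers the stronger conclusion $Y^{\top}\tprod X\in\Kpppp$.
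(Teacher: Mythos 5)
Your overall strategy---expanding $Y^{\top}\tprod X=\sum_{i=1}^n\mathbf y_i^{\top}\tprod\mathbf x_i$ and classifying the summands via Proposition~\ref{prop:t_prod_nonnegative_tubal_scalars}---is exactly the paper's, but you have misread the hypothesis on $Y$. The assumption $Y\in\Knppp$ says $Y$ is \emph{positive} in this paper's nonstandard sense: each $\mathbf y_i\in\Kppp$, i.e.\ each $\mathbf y_i$ is nonnegative with nonzero modulus. It does \emph{not} say each $\mathbf y_i$ is strongly positive, so your step ``hence each $\mathbf y_i^{\top}\in\Kpppp$'' is unjustified, and the clause of Proposition~\ref{prop:t_prod_nonnegative_tubal_scalars} you then invoke (strongly positive times positive is strongly positive) does not apply. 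Your claimed stronger conclusion $Y^{\top}\tprod X\in\Kpppp$ is in fact false: take $n=1$, $p=2$, $\mathbf y_1=\mathbf x_1$ with entries $(1,0)$; then $Y\in\Knppp$, $X\in\Knpp\setminus\{\boldsymbol{0}\}$, but $\mathbf y_1^{\top}\tprod\mathbf x_1$ again has entries $(1,0)$, which is positive but not strongly positive.

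The repair is small and brings you back to the paper's argument: from $Y\in\Knppp$ you only get $\mathbf y_i^{\top}\in\Kppp$ (transposition permutes the entries, so it preserves positivity). Every summand $\mathbf y_i^{\top}\tprod\mathbf x_i$ is nonnegative, and for an index $i_0$ with $\mathbf x_{i_0}\neq\boldsymbol{0}$ you have $\mathbf x_{i_0}\in\Kppp$, so the clause ``$\mathbf a,\mathbf b\in\Kppp\Rightarrow\mathbf a\tprod\mathbf b\in\Kppp$'' of Proposition~\ref{prop:t_prod_nonnegative_tubal_scalars} gives $\mathbf y_{i_0}^{\top}\tprod\mathbf x_{i_0}\in\Kppp$. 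A sum of nonnegative tubal scalars, at least one of which is nonzero, is nonzero and nonnegative, hence lies in $\Kppp$. That is the asserted conclusion, and it is the best one can get without strengthening the hypothesis on $Y$.
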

\begin{proof}
	Write $Y=(\mathbf y_i)$ and $X=(\mathbf x_i)$, $\mathbf y_i,\mathbf x_i\in \Kp$, $i=1,\ldots,n$. Then $Y^{\top}=[\mathbf y_1^{\top},\ldots,\mathbf y_n^{\top}]$, and   $Y^{\top}\tprod X = \sum^n_{i=1} \mathbf y_i^{\top}\tprod \mathbf x_i$. Since every $\mathbf y_i^{\top}\in \Kppp$ and there is at least an $\mathbf x_i\in \Kppp$, the result follows from Proposition \ref{prop:t_prod_nonnegative_tubal_scalars}. 
\end{proof}

\begin{definition}[Nonnegative tubal matrices]
	\label{def:nonnegative_tubal_matrices}
	We call a tubal matrix $A\in\Knp$ nonnegative/positive/strongly positive, if every element of $A$ is nonnegative/positive/strongly positive. 
	
	The set of nonnegative/positive/strongly positive tubal matrices are respectively denoted as $\Knnpp$, $\Knnppp$, and $\Knnpppp$. 
\end{definition}
\begin{remark}
	Under Definition \ref{def:nonnegative_tubal_matrices}, even if $\mathcal A$ is a positive tubal matrix, it can be highly sparse; for example, consider $\mathcal A=(\mathbf a_{i,j})$ where every $\mathbf a_{i,j}$ containign only one positive $\mathbf a^{(k)}_{i,j}$. 
\end{remark}

It is easy to see that:
\begin{proposition}
	\label{prop:nonnegative_tubal_matrix_times_nonnegative_tubal_vector}
	Let $\mathcal A\in\Knnpp$ and $X\in\Knpp$. Then $\mathcal A\tprod X\in \Knpp$. 
\end{proposition}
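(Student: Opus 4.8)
The plan is to prove that $\mathcal A \tprod X \in \Knpp$ by reducing the statement about tubal matrices and tubal vectors to the elementary facts about tubal scalars already established in Proposition \ref{prop:t_prod_nonnegative_tubal_scalars}. First I would write $\mathcal A = (\mathbf a_{i,j})$ with each $\mathbf a_{i,j}\in\Kp$, and $X = (\mathbf x_j)$ with each $\mathbf x_j\in\Kp$. By the formula recalled in the preliminaries for the action of a tubal matrix on a tubal vector, the $i$-th component of $\mathcal A\tprod X$ is
\[
\bigxiaokuohao{\mathcal A\tprod X}_i = \sum^n_{j=1}\nolimits \mathbf a_{i,j}\tprod\mathbf x_j,
\]
so the product is a tubal vector whose entries are explicit sums of t-products of tubal scalars.

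Next I would argue componentwise. By Definition \ref{def:nonnegative_tubal_matrices} and Definition \ref{def:nonnegative_tubal_vectors}, the hypotheses $\mathcal A\in\Knnpp$ and $X\in\Knpp$ mean precisely that every $\mathbf a_{i,j}\in\Kpp$ and every $\mathbf x_j\in\Kpp$. For each fixed pair $(i,j)$, the first assertion of Proposition \ref{prop:t_prod_nonnegative_tubal_scalars} gives $\mathbf a_{i,j}\tprod\mathbf x_j\in\Kpp$. Thus each summand in the expression for $\bigxiaokuohao{\mathcal A\tprod X}_i$ is a nonnegative tubal scalar.

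It then remains to observe that $\Kpp$ is closed under the usual addition $+$: if two tubal scalars have all frontal-slice entries nonnegative, so does their entrywise sum, and this extends to any finite sum. Hence each component $\bigxiaokuohao{\mathcal A\tprod X}_i = \sum_{j} \mathbf a_{i,j}\tprod\mathbf x_j$ lies in $\Kpp$, and therefore $\mathcal A\tprod X\in\Knpp$ by Definition \ref{def:nonnegative_tubal_vectors}.

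I do not anticipate a genuine obstacle here: the result is a routine bookkeeping argument, and the authors themselves flag it as \emph{easy to see}. The only point that warrants a line of care is the closure of $\Kpp$ under addition, since the heavy lifting for the multiplicative part is already packaged in Proposition \ref{prop:t_prod_nonnegative_tubal_scalars}; everything else is an application of the component formula together with the definitions.
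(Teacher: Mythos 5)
Your argument is correct and is exactly the routine verification the paper intends when it states this result with only the remark ``It is easy to see that'' and no written proof: expand $\bigxiaokuohao{\mathcal A\tprod X}_i$ via the component formula, apply the first assertion of Proposition \ref{prop:t_prod_nonnegative_tubal_scalars} to each summand, and use closure of $\Kpp$ under addition. Nothing is missing.
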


\begin{proposition}\label{prop:zero_tubal_matrix_times_positive_tubal_vector}
	Let $\mathcal A\in \Knnpp$ and $X\in \Knppp$. Then $\mathcal A\tprod X=\boldsymbol{0}$ if and only if $\mathcal A = \boldsymbol{0}$.
\end{proposition}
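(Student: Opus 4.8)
The plan is to prove both directions of the equivalence, with the nontrivial direction being the ``only if'' part. The ``if'' direction is immediate: if $\mathcal A=\boldsymbol 0$, then every tube $\mathbf a_{i,j}=\boldsymbol 0$, and since $\boldsymbol 0\tprod\mathbf x=\boldsymbol 0$ for any tubal scalar, the componentwise formula $\bigxiaokuohao{\mathcal A\tprod X}_i=\sum_{j=1}^n \mathbf a_{i,j}\tprod\mathbf x_j$ gives $\mathcal A\tprod X=\boldsymbol 0$.

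For the ``only if'' direction, I would argue componentwise and reduce everything to Proposition \ref{prop:t_Prod_positive_tubal_scalar_times_zero_scalar}. Assume $\mathcal A\tprod X=\boldsymbol 0$. Fix any index $i$; then the $i$-th component vanishes, so $\sum_{j=1}^n \mathbf a_{i,j}\tprod\mathbf x_j=\boldsymbol 0$. The key observation is that each summand $\mathbf a_{i,j}\tprod\mathbf x_j$ is itself a nonnegative tubal scalar: indeed $\mathbf a_{i,j}\in\Kpp$ and $\mathbf x_j\in\Kppp\subset\Kpp$, so by Proposition \ref{prop:t_prod_nonnegative_tubal_scalars} each product lies in $\Kpp$. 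Since a sum of nonnegative tubal scalars is $\boldsymbol 0$ only when every entry of every summand is zero (there is no cancellation among entrywise-nonnegative vectors), each term must satisfy $\mathbf a_{i,j}\tprod\mathbf x_j=\boldsymbol 0$.

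Now I would invoke Proposition \ref{prop:t_Prod_positive_tubal_scalar_times_zero_scalar} term by term: since $\mathbf x_j\in\Kppp$ is positive and $\mathbf a_{i,j}\in\Kpp$, the equality $\mathbf a_{i,j}\tprod\mathbf x_j=\boldsymbol 0$ forces $\mathbf a_{i,j}=\boldsymbol 0$. As $i$ and $j$ were arbitrary, every tube of $\mathcal A$ is zero, hence $\mathcal A=\boldsymbol 0$, completing the proof.

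The main obstacle, and the step deserving the most care, is the claim that $\sum_{j}\mathbf a_{i,j}\tprod\mathbf x_j=\boldsymbol 0$ implies each summand is $\boldsymbol 0$. This needs the fact that all summands are \emph{entrywise} nonnegative, not merely that their moduli are nonnegative; positivity in the sense of Definition \ref{def:nonnegative_tubal_scalars} (nonzero modulus) would permit cancellation, whereas membership in $\Kpp$ rules it out. I would make explicit that the products land in $\Kpp$ via Proposition \ref{prop:t_prod_nonnegative_tubal_scalars}, so that summing preserves entrywise nonnegativity and a zero sum forces each entry of each term to vanish. Everything else is a direct application of the earlier propositions.
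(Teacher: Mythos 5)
Your proof is correct and takes essentially the same route as the paper, whose entire proof is the single line ``This follows from Proposition \ref{prop:t_Prod_positive_tubal_scalar_times_zero_scalar}.'' The details you supply --- that each summand $\mathbf a_{i,j}\tprod\mathbf x_j$ lies in $\Kpp$ so a vanishing sum forces every term to vanish (no cancellation among entrywise nonnegative tubal scalars), followed by a termwise application of Proposition \ref{prop:t_Prod_positive_tubal_scalar_times_zero_scalar} --- are exactly the steps the paper leaves implicit, and you correctly identify the no-cancellation step as the one requiring entrywise nonnegativity rather than mere positivity of modulus.
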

\begin{proof}
	This follows from Proposition \ref{prop:t_Prod_positive_tubal_scalar_times_zero_scalar}. 
\end{proof}

The following result is similar to its matrix counterpart. 
\begin{proposition}
	\label{prop:positive_tubal_matrix_times_tubal_vector}
	Let $\mathcal A\in \Knnpp$. Then $\mathcal A = \bigxiaokuohao{\mathbf a_{i,j}}\in \Knnppp$ if and only if for every $X = \bigxiaokuohao{\mathbf x_{j}}\in \Knpp$, $X\neq \boldsymbol{0}$, $\mathcal A\tprod X\in \Knppp$. 
\end{proposition}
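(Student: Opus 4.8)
The plan is to transplant the classical nonnegative-matrix argument, treating the two implications separately and exploiting throughout the identity $\bigxiaokuohao{\mathcal A\tprod X}_i=\sum_{j=1}^n\mathbf a_{i,j}\tprod\mathbf x_j$, which exhibits each component of $\mathcal A\tprod X$ as a finite sum of t-products of tubal scalars.

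For the ``only if'' direction, I would assume $\mathcal A\in\Knnppp$, so that every $\mathbf a_{i,j}\in\Kppp$, and take an arbitrary nonzero $X=(\mathbf x_j)\in\Knpp$. Since $X\neq\boldsymbol 0$, at least one component $\mathbf x_{j_0}$ is nonzero, hence $\mathbf x_{j_0}\in\Kppp$. Fixing a row index $i$, Proposition \ref{prop:t_prod_nonnegative_tubal_scalars} gives that each summand $\mathbf a_{i,j}\tprod\mathbf x_j$ lies in $\Kpp$, while the distinguished summand $\mathbf a_{i,j_0}\tprod\mathbf x_{j_0}$, being a t-product of two positive nonnegative tubal scalars, lies in $\Kppp$. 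It then remains to argue that the whole sum is positive; this is where the only genuine subtlety lies. Because every entry of every summand is nonnegative, adding the summands can only keep entries fixed or increase them, so no cancellation is possible: any frontal-slice position where $\mathbf a_{i,j_0}\tprod\mathbf x_{j_0}$ is strictly positive remains strictly positive in the sum. Hence $\bigxiaokuohao{\mathcal A\tprod X}_i\in\Kppp$ for every $i$, i.e. $\mathcal A\tprod X\in\Knppp$.

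For the ``if'' direction, I would argue by contraposition. Suppose $\mathcal A\in\Knnpp$ but $\mathcal A\notin\Knnppp$; then some entry vanishes, say $\mathbf a_{i_0,j_0}=\boldsymbol 0$. Choose the nonnegative, nonzero tubal vector $X$ whose $j_0$-th component equals the unity $\mathbf e$ and whose remaining components are $\boldsymbol 0$. Using the unity property $\mathbf e\tprod\mathbf a=\mathbf a$ together with commutativity of $*$ in the ring $\Kp$, the $i_0$-th component collapses to $\bigxiaokuohao{\mathcal A\tprod X}_{i_0}=\mathbf a_{i_0,j_0}\tprod\mathbf e=\mathbf a_{i_0,j_0}=\boldsymbol 0$, so $\mathcal A\tprod X\notin\Knppp$. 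This produces a nonzero nonnegative $X$ violating the stated property, which is exactly the contrapositive of the claim.

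Overall the argument is short and elementary once the componentwise description of the t-product is in hand; the main (and essentially the only) obstacle is making the no-cancellation step of the forward direction precise, namely that a sum of tubal scalars in $\Kpp$ with at least one summand in $\Kppp$ must itself lie in $\Kppp$. This is immediate from the definition of $\Kpp$ as entrywise nonnegativity, but I would state it explicitly, perhaps as a one-line auxiliary observation, since it is the hinge of the proof.
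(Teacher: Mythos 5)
Your proof is correct and takes essentially the same route as the paper's: the forward direction expands $\bigxiaokuohao{\mathcal A\tprod X}_i=\sum_j\mathbf a_{i,j}\tprod\mathbf x_j$ and invokes Proposition \ref{prop:t_prod_nonnegative_tubal_scalars} plus the no-cancellation observation, while the reverse direction tests against a tubal vector supported on the single index of a vanishing entry (the paper uses an arbitrary $\mathbf x_{j_0}\in\Kppp$ where you use $\mathbf e$, and phrases it as a contradiction rather than a contrapositive — both differences are cosmetic).
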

\begin{proof}
If $\mathcal A\in \Knnppp$ and $X\in\Knpp$, $X\neq \boldsymbol{0}$, then $\mathbf a_{i,j}\in \Kppp$ for every $i,j$, and there exists at least an $\mathbf x_j \in \Kppp$. So,  $\bigxiaokuohao{\mathcal A\tprod X}_i = \mathcal A_{i,}\tprod X = \sum^n_{j=1}\mathbf a_{i,j}\tprod\mathbf x_j \in \Kppp$, where the last relation follows from Proposition \ref{prop:t_prod_nonnegative_tubal_scalars}. 

If for every $X = \bigxiaokuohao{\mathbf x_{j}}\in \Knpp$, $X\neq \boldsymbol{0}$, $\mathcal A\tprod X\in \Knppp$, while $\mathcal A\not\in \Knnppp$, assume without loss of generality that $\mathbf a_{1,1} = \boldsymbol{0}$. Take $X= \bigxiaokuohao{\mathbf x_j}$ with $\mathbf x_1\in \Kppp$ and $\mathbf x_j=\boldsymbol{0}, j=2,\ldots,n$. Then $\mathcal A_{1,}\tprod X = \sum^n_{i=1}\mathbf a_{1,j}\tprod\mathbf x_j = \boldsymbol{0}$, deducing a contradiction. 
\end{proof}

\begin{remark}
	It follows from the definition of the transpose that $\mathbf a\in \Kpp (\Kpp,~\Kppp, {\rm~or}~\Kpppp)$, then so is $\mathbf a^{\top}$. The same observations hold for the tubal vector $A^{\top}$ and the tubal matrix $\mathcal A^{\top}$.  
\end{remark}

We then define irreducible tubal matrices. Let $[n]:=\{1,\ldots,n  \}$.
\begin{definition}[Reducibility and irreducibility]
	\label{def:irreducible_tubal_matrices}
	We call a tubal matrix $\mathcal A=\bigxiaokuohao{\mathbf a_{i,j}}\in \Knnp$ reducible, if there is a nonempty proper index subset $I\subset [n]$ such that 
	\[
	\mathbf a_{i,j} = \boldsymbol{0},~\forall i\in I,~\forall j\not\in I. 
	\]
	If $\mathcal A$ is not reducible, then we call $\mathcal A$ irreducible. 
\end{definition}
\begin{remark}
	When $p=1$, the above definition boils down exactly to the reducibility/irreducibility of a matrix. 
\end{remark}
We provide some examples.
\begin{example}\label{ex:1}
	Let $\mathcal A\in \Knnp$ with $\mathcal A^{(1)} =\left[ \begin{smallmatrix}
		0 & 1 \\
		0 & 0
	\end{smallmatrix}\right]$ and $\mathcal A^{(2)} = \left[\begin{smallmatrix}
	0 & 0\\
	1 & 0
\end{smallmatrix}\right]$. It is clear that $\mathbf a_{1,2},\mathbf a_{2,1}\in\Kppp$, and so $\mathcal A$ is irreducible.  
\end{example}
\begin{example}
	Let $\mathcal A\in \Knnp$ with $\mathcal A^{(1)} =\left[ \begin{smallmatrix}
		0 & 1 \\
		0 & 0
	\end{smallmatrix}\right]$ and $\mathcal A^{(2)} = \left[\begin{smallmatrix}
		0 & 0\\
		0 & 0
	\end{smallmatrix}\right]$. Let $I=\{2\}$. Then $\mathbf a_{i,j}=0$, $\forall i\in I$, $\forall j\not\in I$, and so $\mathcal A$ is reducible. 
\end{example}

Restricting to third-order tensors, the previous reducibility/irreducibility  was defined as follows:
\begin{definition}\cite{cpz2008}
	\label{def:redu_pre}
	Let $\mathcal A\in\mathbb R^{n\times n\times n}$. $\mathcal A$ is called reducible if there is a nonempty proper index subset $I\subset [n]$ such that
	\[
	\mathcal A_{ijk} = 0,~\forall i\in I,\forall j,k\not\in I.
	\]
	If $\mathcal A$ is not reducible, then $\mathcal A$ is called irreducible.
\end{definition}

It follows from the definition of $\mathcal A^{\top}$ that:
\begin{proposition}
	\label{prop:irreducible_transpose}
	If $\mathcal A\in \Knnp$ is reducible/irreducible, then so is $\mathcal A^{\top}$.
\end{proposition}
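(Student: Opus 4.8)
The plan is to reduce the whole statement to a single observation about the zero-pattern of tubes under transposition, after which the argument is identical to the matrix case. First I would read off from the definition of $\mathcal A^{\top}$ how the $(i,j)$-tube of $\mathcal A^{\top}$ relates to the tubes of $\mathcal A$. Since the transpose transposes each frontal slice and then reverses the order of the slices $2$ through $p$, the first slice of $\mathcal A^{\top}$ is $(\mathcal A^{(1)})^{\top}$ and its $k$-th slice is $(\mathcal A^{(p-k+2)})^{\top}$ for $k=2,\ldots,p$. Denoting the $(i,j)$-tube of $\mathcal A^{\top}$ by $\mathbf b_{i,j}$, this gives $\mathbf b^{(1)}_{i,j}=\mathbf a^{(1)}_{j,i}$ and $\mathbf b^{(k)}_{i,j}=\mathbf a^{(p-k+2)}_{j,i}$, so the entries of $\mathbf b_{i,j}$ are exactly the entries of $\mathbf a_{j,i}$, merely reordered. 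The key fact I would extract is therefore: $\mathbf b_{i,j}=\boldsymbol{0}$ if and only if $\mathbf a_{j,i}=\boldsymbol{0}$, because vanishing of a tube depends only on its multiset of entries and is insensitive to any permutation of frontal slices.

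With this zero-pattern relation in hand, the reducible case follows directly. Suppose $\mathcal A$ is reducible, so there is a nonempty proper $I\subset[n]$ with $\mathbf a_{i,j}=\boldsymbol{0}$ for all $i\in I$, $j\notin I$. I would set $J:=[n]\setminus I$, which is again nonempty and proper, and verify that $J$ witnesses reducibility of $\mathcal A^{\top}$: for $i\in J$ and $j\notin J$ (that is, $j\in I$), the required $\mathbf b_{i,j}=\boldsymbol{0}$ is equivalent to $\mathbf a_{j,i}=\boldsymbol{0}$, which holds since $j\in I$ and $i=\nolinebreak[n]\setminus I\ni i\notin I$. Hence $\mathcal A^{\top}$ is reducible. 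For the irreducible case I would argue by contraposition, using the involution $(\mathcal A^{\top})^{\top}=\mathcal A$, which I would confirm directly from the transpose formula (the two successive slice reversals cancel). If $\mathcal A^{\top}$ were reducible, the reducible case applied to $\mathcal A^{\top}$ would force $(\mathcal A^{\top})^{\top}=\mathcal A$ to be reducible, contradicting the irreducibility of $\mathcal A$.

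The only point requiring care is the bookkeeping of the slice reversal in the definition of $\mathcal A^{\top}$, and this turns out to be harmless precisely because reducibility is phrased entirely in terms of whole tubes being $\boldsymbol{0}$ rather than in terms of individual frontal slices; the reversal permutes the entries within each tube but never moves a nonzero entry out of, or a zero entry into, the tube at a given $(i,j)$ position. Consequently there is no real obstacle: the proposition is the tubal-matrix restatement of the elementary matrix fact that $A$ is (ir)reducible if and only if $A^{\top}$ is, with the role of the matrix support played here by the support pattern $\{(i,j):\mathbf a_{i,j}\neq\boldsymbol{0}\}$.
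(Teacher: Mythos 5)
Your proof is correct and matches the paper's intent: the paper offers no written proof, asserting only that the proposition ``follows from the definition of $\mathcal A^{\top}$,'' and your argument simply makes that explicit --- the slice reversal permutes entries within each tube, so $\mathbf b_{i,j}=\boldsymbol{0}$ iff $\mathbf a_{j,i}=\boldsymbol{0}$, and the complement $J=[n]\setminus I$ witnesses reducibility of $\mathcal A^{\top}$. The only blemish is the garbled clause ``$i=[n]\setminus I\ni i\notin I$,'' which should just read $i\in J=[n]\setminus I$, hence $i\notin I$; the mathematics is unaffected.
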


We have the following relation between the two definitions of irreducibility. 
\begin{proposition}
	\label{prop:two_def_irreducibility}
	Let $\mathcal A\in \mathbb K^{n\times n}_n$. If it is irreducible in the sense of Definition \ref{def:redu_pre}, then it is also irreducible in the sense of our Definition \ref{def:irreducible_tubal_matrices}.
\end{proposition}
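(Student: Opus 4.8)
The plan is to argue by contraposition: I will show that if $\mathcal A$ is reducible in the sense of Definition \ref{def:irreducible_tubal_matrices}, then it is already reducible in the sense of Definition \ref{def:redu_pre}. Taking the contrapositive of this implication yields exactly the claim, since in each setting irreducibility is by definition the negation of reducibility.

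First I would pin down the index conventions, which is where all the care is needed. Here $p=n$, and viewing $\mathcal A\in\mathbb R^{n\times n\times n}$ as a tubal matrix in $\mathbb K^{n\times n}_n$, the tube $\mathbf a_{i,j}$ collects the entries $\mathbf a^{(k)}_{i,j}=\mathcal A_{ijk}$ for $k\in[n]$. Consequently the vanishing condition $\mathbf a_{i,j}=\boldsymbol{0}$ is equivalent to $\mathcal A_{ijk}=0$ for \emph{every} $k\in[n]$, with no restriction whatsoever on the third index.

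Next I would assume $\mathcal A$ is reducible as a tubal matrix, so that there is a nonempty proper $I\subset[n]$ with $\mathbf a_{i,j}=\boldsymbol{0}$ for all $i\in I$ and $j\notin I$; by the previous step this means $\mathcal A_{ijk}=0$ for all $i\in I$, $j\notin I$, and all $k\in[n]$. I would then take the same index set $I$ and check Definition \ref{def:redu_pre}: for any $i\in I$ and $j,k\notin I$ we in particular have $j\notin I$, hence $\mathcal A_{ijk}=0$. Thus $\mathcal A$ is reducible in the sense of Definition \ref{def:redu_pre} with the same $I$, which closes the contrapositive.

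The single point driving the argument is that tubal-matrix reducibility zeros out the tubes $\mathbf a_{i,j}$ for $i\in I$, $j\notin I$ over the entire third mode, whereas Definition \ref{def:redu_pre} only demands $\mathcal A_{ijk}=0$ on the smaller triple set $i\in I$, $j,k\notin I$. Since the constraints of the latter are contained in those of the former, tubal reducibility is the strictly stronger condition, so I expect no genuine obstacle here; the only thing to get right is keeping the ranges of $j$ and $k$ straight. This same containment, read in the opposite direction, is what makes our notion of irreducibility strictly wider, as recorded in Remark \ref{rmk:irreducible_wider}.
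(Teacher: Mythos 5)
Your proof is correct and follows essentially the same route as the paper's: both argue by contraposition, observing that tubal reducibility forces $\mathcal A_{ijk}=0$ for all $i\in I$, $j\notin I$, and \emph{all} $k\in[n]$, which in particular covers the smaller constraint set $j,k\notin I$ required by Definition \ref{def:redu_pre}. Your write-up is, if anything, slightly more explicit than the paper about why the same index set $I$ witnesses reducibility in the older sense.
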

\begin{proof}
	It suffices to show that if $\mathcal A$ is reducible in the sense of our Definition \ref{def:irreducible_tubal_matrices}, then it is reducible in the sense of Definition \ref{def:redu_pre}.
	
Write $\mathcal A = \bigxiaokuohao{\mathcal A_{ijk}} = \bigxiaokuohao{\mathbf a_{i,j}}$, where $\mathcal A_{ijk}\in\mathbb R$ and $\mathbf a_{i,j}\in \Knp$.	Assume that there is $I\subset [n]$ such that $\mathbf a_{i,j}=\boldsymbol{0}$, $\forall i\in I,\forall j\not\in I$, This means that
	\[
	\mathcal A_{ijk} = \mathbf a^{(k)}_{i,j} = 0,\forall i\in I,\forall j\not\in I, \forall k\in [n],
	\] 
	wihch is clearly reducible in the sense of Definition \ref{def:redu_pre}. 
\end{proof}
\begin{remark}\label{rmk:irreducible_wider}
	Example \ref{ex:1} provides an example which is irreducible in the sense of Definition \ref{def:irreducible_tubal_matrices} while is reducible in the sense of Definition \ref{def:redu_pre}. In fact, we see that $\mathcal A_{122} = \mathbf a^{(2)}_{1,2} = 0$ in this example, showing its reducibility in the sense of Definition \ref{def:redu_pre}. 
	
	Therefore, our definition of irreducibility covers a wider range of tubal matrices than that of \cite{cpz2008}. 
\end{remark}

Similar to the matrix setting, we have the following equivalent characterizations of nonnegative irreducibility.
\begin{theorem}\label{th:reducible}
	Let $\mathcal A\in \Knnp$. Then $\mathcal A$ is reducible if and only if
	\begin{itemize}
		\item Every frontal slice $\mathcal A^{(i)}$ is a reducible matrix; more specifically, there exists a permutation matrix $P$, such that $$P\mathcal A^{(i)} P^{\top} = \left[
		\begin{matrix}
		\mathcal	B^{(i)} &\mathcal C^{(i)} \\
			\boldsymbol{0} &\mathcal D^{(i)}
		\end{matrix}
		\right],~i=1,\ldots,p,$$
		where the bottom left is a zero matrix, and $B^{(i)}$ and $D^{(i)}$ are square matrices. 
		\item There exists a permutation tensor $\mathcal P$, such that 
		\[
		\mathcal P\tprod \mathcal A \tprod \mathcal P^{\top} = \left[\begin{matrix}
\mathcal B & \mathcal C\\
\boldsymbol{0} & \mathcal D
		\end{matrix}\right],
		\]
		where the right-hand side means that $\mathcal P\tprod \mathcal A \tprod \mathcal P^{\top} $ takes a block structure, whose  bottom left is a zero tensor; $\mathcal B$ and $\mathcal D$ are square tensors. 
	\end{itemize}
\end{theorem}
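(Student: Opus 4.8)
The plan is to prove the chain reducibility $\Leftrightarrow$ first characterization $\Leftrightarrow$ second characterization. The pivotal observation is that, by the definition of the modulus in Section~\ref{sec:nonnegative}, the tube-zero condition $\mathbf a_{i,j}=\boldsymbol 0$ is equivalent to $\mathbf a^{(k)}_{i,j}=\mathcal A^{(k)}_{i,j}=0$ holding for \emph{every} $k\in[p]$ simultaneously. Consequently, a single index set $I$ witnessing reducibility forces exactly the same zero pattern in all $p$ frontal slices at once, and it is this collapsing of $p$ slicewise patterns onto one common $I$ that makes the whole statement work.

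First I would treat reducibility $\Leftrightarrow$ the first bullet. Assuming $\mathcal A$ reducible, take the witnessing nonempty proper $I\subset[n]$ and fix one permutation matrix $P$ realizing a relabeling of $[n]$ that lists $[n]\setminus I$ first and $I$ last, with $n_1=|[n]\setminus I|$ and $n_2=|I|$. Because $\mathcal A^{(k)}_{i,j}=0$ for $i\in I$, $j\notin I$ and \emph{all} $k$, this one $P$ brings each $\mathcal A^{(k)}$ to block upper triangular form with a zero bottom-left $n_2\times n_1$ block, which is the asserted form with $\mathcal B^{(k)},\mathcal D^{(k)}$ square of orders $n_1,n_2$. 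Conversely, from a common $P$ of this kind I let $I$ be the set of original indices carried into the bottom block; the vanishing bottom-left of every slice gives $\mathcal A^{(k)}_{i,j}=0$ for $i\in I$, $j\notin I$ and all $k$, hence $\mathbf a_{i,j}=\boldsymbol 0$ there, so $\mathcal A$ is reducible. For the slicewise translation between a permuted zero off-diagonal block and the index-set formulation I would invoke the standard matrix fact (the $p=1$ case recorded in the remark following Definition~\ref{def:irreducible_tubal_matrices}), applied uniformly in $k$ with the \emph{same} partition $(n_1,n_2)$.

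Next I would establish the first bullet $\Leftrightarrow$ the second, which is essentially a reformulation through the permutation tensor. Let $\mathcal P$ be the permutation tubal matrix whose first frontal slice is exactly the matrix $P$ above and whose other slices vanish; then formula~\eqref{def:permutation_tensor} states that the $k$-th frontal slice of $\mathcal P\tprod\mathcal A\tprod\mathcal P^{\top}$ is precisely $P\mathcal A^{(k)}P^{\top}$. Hence the simultaneous block-triangular form of all frontal slices in the first bullet is equivalent to $\mathcal P\tprod\mathcal A\tprod\mathcal P^{\top}$ having every frontal slice block upper triangular with a common zero bottom-left block, which is exactly the tensor-level block partition in the second bullet: the sub-tensors $\mathcal B,\mathcal C,\mathcal D$ are assembled slicewise from the corresponding blocks, and the bottom-left sub-tensor is the zero tensor precisely because each of its frontal slices is the zero matrix. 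The reverse direction simply reads the frontal slices off the tensor block form.

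I expect the only genuine subtlety to be the insistence on a single permutation valid for all $p$ frontal slices: individual slices being reducible by different permutations is strictly weaker and need not imply tubal reducibility. This is exactly what the tube-zero condition supplies, since $\mathbf a_{i,j}=\boldsymbol 0$ forces the $p$ slicewise zero patterns to share one index set $I$. What remains is bookkeeping: keeping the row/column partition $(n_1,n_2)$ fixed across slices, and recording that a tensor all of whose frontal slices are block upper triangular with these fixed block sizes is itself block upper triangular, with zero bottom-left sub-tensor iff all its frontal slices vanish there — immediate from the frontal-slice-wise action in~\eqref{def:permutation_tensor}.
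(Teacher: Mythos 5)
Your proof is correct and follows essentially the same route as the paper's (much terser) argument: extract a single permutation $P$ from the witnessing index set $I$ --- which works for all $p$ frontal slices simultaneously precisely because $\mathbf a_{i,j}=\boldsymbol 0$ forces a common zero pattern across slices --- and then lift $P$ to the permutation tensor $\mathcal P$ with $\mathcal P^{(1)}=P$ and invoke the slicewise action in~\eqref{def:permutation_tensor}. Your explicit emphasis on why one permutation suffices for all slices is exactly the point the paper leaves implicit.
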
 
\begin{proof}
	The first claim easily follows from Definition \ref{def:irreducible_tubal_matrices}, while the second one holds by taking $\mathcal P$ with $\mathcal P^{(1)}=P$ and the other frontal slices being zero and noticing \eqref{def:permutation_tensor}. 
\end{proof}

\begin{theorem}\label{th:irreducible}
	Let $\mathcal A\in \Knnp$. Then $\mathcal A$ is nonnegative irreducible if and only if 
		  $$\bigxiaokuohao{\mathcal A + \Identity}^{(n-1)} = \overbrace{\bigxiaokuohao{\mathcal A + \Identity}\tprod \cdots \tprod \bigxiaokuohao{\mathcal A + \Identity}}^{{n-1~ {\rm times}}} \in \Knnppp.$$
\end{theorem}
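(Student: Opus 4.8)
The plan is to reduce the statement to the classical matrix fact that a nonnegative $n\times n$ matrix $M$ is irreducible if and only if $(I+M)^{n-1}$ is entrywise positive, by passing to the \emph{positivity pattern} of a nonnegative tubal matrix. For a nonnegative $\mathcal A=\bigxiaokuohao{\mathbf a_{i,j}}\in\Knnpp$, I would introduce the $0/1$ matrix $M(\mathcal A)\in\{0,1\}^{n\times n}$ defined by $M(\mathcal A)_{ij}=1$ if $\mathbf a_{i,j}\in\Kppp$ and $M(\mathcal A)_{ij}=0$ if $\mathbf a_{i,j}=\boldsymbol 0$; this is well defined because, for a nonnegative tubal scalar, being positive means exactly being nonzero (Remark~\ref{rmk:nonnegative_tubal_scalar}). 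By Definition~\ref{def:irreducible_tubal_matrices}, the condition $\mathbf a_{i,j}=\boldsymbol 0$ for all $i\in I,\ j\notin I$ translates verbatim into $M(\mathcal A)_{ij}=0$ for all $i\in I,\ j\notin I$, so $\mathcal A$ is reducible if and only if $M(\mathcal A)$ is reducible as a matrix. It therefore suffices to track how the pattern $M(\cdot)$ transforms under the t-product.

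The key step is to show that, for nonnegative tubal matrices, the pattern of a t-product is the Boolean product of the patterns. At the tubal-scalar level, Propositions~\ref{prop:t_prod_nonnegative_tubal_scalars} and \ref{prop:t_Prod_positive_tubal_scalar_times_zero_scalar} together give that $\mathbf a\tprod\mathbf b$ is positive if and only if both $\mathbf a$ and $\mathbf b$ are positive; and since all entries involved are nonnegative there is no cancellation, so a sum $\sum_k \mathbf a_{i,k}\tprod\mathbf b_{k,j}$ is positive if and only if at least one summand is positive. Combining these with the entrywise formula $(\mathcal A\tprod\mathcal B)_{i,j}=\sum_{k}\mathbf a_{i,k}\tprod\mathbf b_{k,j}$ yields $M(\mathcal A\tprod\mathcal B)=M(\mathcal A)\odot M(\mathcal B)$, where $\odot$ denotes Boolean matrix multiplication. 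Because the diagonal entries of $\mathcal A+\Identity$ are $\mathbf a_{i,i}+\mathbf e$, which are always positive, while the off-diagonal entries are unchanged, one gets $M(\mathcal A+\Identity)=I\vee M(\mathcal A)$. Iterating the product rule then shows that $M\bigxiaokuohao{(\mathcal A+\Identity)^{(n-1)}}$ equals the $(n-1)$-st Boolean power of $I\vee M(\mathcal A)$.

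It remains to observe that $(\mathcal A+\Identity)^{(n-1)}\in\Knnppp$ if and only if $M\bigxiaokuohao{(\mathcal A+\Identity)^{(n-1)}}$ is the all-ones matrix, and to invoke the classical matrix theorem in the form: $M(\mathcal A)$ is irreducible if and only if the $(n-1)$-st Boolean power of $I\vee M(\mathcal A)$ is all-ones. I would prove the latter through the directed graph of $M(\mathcal A)$: the $(i,j)$ entry of the $k$-th power of $I\vee M(\mathcal A)$ equals $1$ exactly when there is a directed walk of length at most $k$ from $i$ to $j$, and irreducibility is equivalent to strong connectivity, for which reachability within $n-1$ steps always suffices. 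Chaining these equivalences with the reduction of the first paragraph proves the theorem. For the reverse implication one uses that $\mathcal A$ is a nonnegative tubal matrix, so that the pattern calculus applies; the nonnegativity is part of the standing hypothesis, since positivity of the power cannot by itself rule out sign cancellations in a general $\mathcal A$.

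The main obstacle is the second paragraph: one must be certain that passing from nonnegative tubal matrices to their $0/1$ patterns is \emph{exactly} compatible with the t-product, that is, that the circulant structure underlying $\tprod$ never creates cancellation nor destroys positivity. This is precisely what Propositions~\ref{prop:t_prod_nonnegative_tubal_scalars} and \ref{prop:t_Prod_positive_tubal_scalar_times_zero_scalar} guarantee at the tubal-scalar level, and once this bridge is firmly in place the remainder is bookkeeping that mirrors the classical matrix argument.
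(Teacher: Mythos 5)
Your proof is correct, but it takes a genuinely different route from the paper's. The paper argues directly in the tubal setting: for necessity it fixes an arbitrary nonzero $X\in\Knpp$, forms $X_{k+1}=(\mathcal A+\Identity)\tprod X_k$, and shows by a permutation-and-block-partition argument that the number of zero tubal scalars in $X_k$ strictly decreases while positive (this is the classical $(I+A)^{n-1}>0$ proof transcribed to tubal vectors, combined with Proposition \ref{prop:positive_tubal_matrix_times_tubal_vector}); for sufficiency it notes that a reducible $\mathcal A$ can be permuted to block upper-triangular form, which persists under powers. You instead factor the whole problem through the $0/1$ positivity pattern $M(\mathcal A)$ and prove that $M$ is a homomorphism onto Boolean matrix algebra, i.e.\ $M(\mathcal A\tprod\mathcal B)=M(\mathcal A)\odot M(\mathcal B)$ for nonnegative tubal matrices; this is legitimate because, at the tubal-scalar level, Propositions \ref{prop:t_prod_nonnegative_tubal_scalars} and \ref{prop:t_Prod_positive_tubal_scalar_times_zero_scalar} give that a t-product of nonnegative tubal scalars is positive exactly when both factors are, and nonnegativity forbids cancellation in the sums $\sum_k\mathbf a_{i,k}\tprod\mathbf b_{k,j}$. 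Your reduction then hands both directions to the classical matrix theorem at once, which is more modular and makes the graph-theoretic content transparent; the pattern-homomorphism lemma would also let you read off Theorem \ref{th:reducible} for free. What the paper's direct argument buys is self-containedness and intermediate facts (the decreasing count of zero tubal scalars under multiplication by $\mathcal A+\Identity$) that are closer to what is reused in the Perron--Frobenius section. Two small points to tighten: you should record the entrywise formula $\bigxiaokuohao{\mathcal A\tprod\mathcal B}_{i,j}=\sum_k\mathbf a_{i,k}\tprod\mathbf b_{k,j}$ (it follows from the block-partition rule quoted in Section \ref{sec:pre}), and you correctly flag that nonnegativity of $\mathcal A$ must be taken as a standing hypothesis for the ``if'' direction --- the paper's own sufficiency proof makes the same implicit assumption.
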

\begin{proof}
Necessarity: According to Proposition \ref{prop:positive_tubal_matrix_times_tubal_vector}, it suffices to show that for any $X\in\Knpp$, $X\neq \boldsymbol{0}$, it holds that
\[
\bigxiaokuohao{\mathcal A + \Identity}^{n-1}\tprod X \in\Knppp.
\]

 For any $X\in\Knpp$, $X\neq \boldsymbol{0}$, denote $X_0:=X$. Since the associative law holds for the t-product (\cite[Lemma 3.3]{kilmer2011factorization}), It then follows that we can recursively define $$X_{k+1}:= \bigxiaokuohao{\mathcal A + \Identity}\tprod X_k = \bigxiaokuohao{\mathcal A+\Identity}^{k+1}X_0,~k=0,1,\ldots.$$  
 Proposition \ref{prop:nonnegative_tubal_matrix_times_nonnegative_tubal_vector} shows that all the $X_k$ are in $\Knpp$. Represent $X_k$ as $X_k= \bigxiaokuohao{\mathbf x_{k,i}}$, $i=1,\ldots,n$.  Denote $M_k$ as the number of zero tubal scalars of $X_k$, i.e.,  $M_k:={\rm card}\bigdakuohao{ i \mid \mathbf x_{k,i} =\boldsymbol{0} }$. Since $X_{k+1} = \mathcal A\tprod X_k + \Identity\tprod X_k = \mathcal A\tprod X_k + X_k$, by Proposition \ref{prop:nonnegative_tubal_matrix_times_nonnegative_tubal_vector}, there holds
 \[
 M_{k+1}\leq M_k \leq \cdots\leq M_0 \leq n-1,
 \]
 where the last relation follows from $X_0 \neq \boldsymbol{0}$. We then wish to show that if $M_k>0$, then $M_{k+1}<M_k$. 
 
 Otherwise, suppose that there exists a $k$ such that $M_{k+1}=M_k$; from the construction of $X_{k+1}$ and by Proposition \ref{prop:permutation_tensor_on_tubal_vector}, there exists a permutation tubal matrix $\mathcal P\in \Knnp$, such that
 \[
 \mathcal P\tprod X_{k+1} = \bigzhongkuohao{\begin{matrix}
 		A\\
 		\boldsymbol{0}
 \end{matrix}},~ \mathcal P\tprod X_k = \bigzhongkuohao{\begin{matrix}
 B\\
 \boldsymbol{0}
\end{matrix}},
 \]
 where $A,B\in \mathbb K_{p++}^{n-M_k}$, and $\boldsymbol{0}\in \mathbb K_p^{M_k}$. Since
 \[
 \mathcal P\tprod X_{k+1} = \mathcal P\tprod\mathcal A\tprod X_k + \mathcal P\tprod X_k = \mathcal P\tprod\mathcal A\tprod \mathcal P^{\top}\tprod\mathcal P\tprod X_k + \mathcal P\tprod X_k,
 \]
 where the last relation uses Proposition \ref{prop:permutation_tensor}, it follows that
 \begin{align}\label{eq:proof_irreducible_:1}
\bigzhongkuohao{\begin{matrix}
		A\\
		\boldsymbol{0}
\end{matrix}} = \left[\begin{matrix}
\mathcal B & \mathcal C\\
\mathcal D & \mathcal E
\end{matrix}\right] \tprod \bigzhongkuohao{\begin{matrix}
	B\\
	\boldsymbol{0}
\end{matrix}} + \bigzhongkuohao{\begin{matrix}
B\\
\boldsymbol{0}
\end{matrix}},
 \end{align}
 where we denote 
 \[
 \mathcal P\tprod\mathcal A\tprod\mathcal P^{\top} = \left[\begin{matrix}
 	\mathcal B & \mathcal C\\
 	\mathcal D & \mathcal E
 \end{matrix}\right], 
 \]
 where the block tensors $\mathcal B,\mathcal C,\mathcal D,\mathcal E$ are partitioned according to the size of $B$ and $\boldsymbol{0}$. In particular, $\mathcal B\in \mathbb K_p^{ (n-M_k)\times (n-M_k)  }$, and $\mathcal E\in \mathbb K_p^{M_k\times M_k}$. Comparing the left and right hand sides of \eqref{eq:proof_irreducible_:1}, we see that
 \[
 \mathcal D\tprod B = \boldsymbol{0}.
 \]
 Since $\mathcal D\in \Knnpp$ and $B\in \mathbb K_{p++}^{n-M_k}$, it follows from Proposition \ref{prop:zero_tubal_matrix_times_positive_tubal_vector} that $\mathcal D\equiv \boldsymbol{0}$. While according to Theorem \ref{th:reducible}, $\mathcal A$ is reducible, leading to a contradiction. 
 
 As a consequence, $M_{k+1}< M_k$, for any $k$ with $M_k>0$. Therefore, by the definition of $M_k$, $M_{n-1}=0$, and so $X_{n-1} = \bigxiaokuohao{\mathcal A + \Identity}^{n-1}\tprod X \in \Knppp$. 
 
 Sufficiency: Suppose on the contrary that $\mathcal A$ is reducible. By Theorem \ref{th:reducible}, there exists a permutation tubal matrix $\mathcal P\in \Knnp$, such that 
 \[
 \mathcal P \tprod\mathcal A\tprod\mathcal P^{\top} = \bigzhongkuohao{\begin{matrix}
 		\mathcal B & \mathcal C\\
 		\boldsymbol{0} & \mathcal D
 \end{matrix}},
 \]
 where $\mathcal B$ and $\mathcal D$ are square tubal matrix. From the orthogonality of $\mathcal P$, we then have
 \[
 \mathcal P\tprod\bigxiaokuohao{\mathcal A+ \Identity}\tprod\mathcal P^{\top} = \Identity + \bigzhongkuohao{\begin{matrix}
 		\mathcal B & \mathcal C\\
 		\boldsymbol{0} & \mathcal D
 \end{matrix}} = \bigzhongkuohao{\begin{matrix}
 \mathcal B  + \mathcal I_1 & \mathcal C\\
 \boldsymbol{0} & \mathcal D + \mathcal I_2
\end{matrix}},
 \]
where $\mathcal I_1$ and $\mathcal I_2$ are identity tubal matrices of proper size. Then for any $k$,
 \[
 \mathcal P\tprod \bigxiaokuohao{\mathcal A + \Identity}^{k}\tprod \mathcal P^{\top} = \bigxiaokuohao{\mathcal P\tprod\bigxiaokuohao{\mathcal A + \Identity}\tprod\mathcal P^{\top}}^k = \bigzhongkuohao{\begin{matrix}
 		\mathcal B  + \mathcal I_1 & \mathcal C\\
 		\boldsymbol{0} & \mathcal D + \mathcal I_2
 \end{matrix}}^k.
 \]
 Note that for any $k$, the right-hand side is always an upper triangular tubal matrix, which means that $\bigxiaokuohao{\mathcal A + \Identity}^k$ always contains zero tubal scalar(s), which contradicts the condition that $\bigxiaokuohao{\mathcal A + \Identity}^{n-1}\in \Knnppp$. 
\end{proof}

We present two examples to illustrate the above results.
\begin{example}
	Consider $\mathcal A =\bigxiaokuohao{\mathbf a_{i,j}}\in \mathbb K^{3\times 3 }_3$ with 
	\[
	\mathcal A^{(1)} = \bigzhongkuohao{\begin{matrix}
			0 & 1 & 0\\ 
			0 & 0 & 0\\
			0 & 0& 0
	\end{matrix}},
\mathcal A^{(2)} = \bigzhongkuohao{\begin{matrix}
		0 & 0 & 0\\
		0 & 0 & 1\\
		0 & 0& 0\\
\end{matrix}},
\mathcal A^{(3)}= \bigzhongkuohao{\begin{matrix}
		0 & 0 & 0\\
		0 & 0 & 0\\
		1 & 0& 0\\
\end{matrix}};
	\]
	that is, $\mathbf a_{1,2} = \fold{\bigzhongkuohao{\begin{smallmatrix}
				1\\0 \\ 0
	\end{smallmatrix}}}$, $\mathbf a_{2,3} = \fold{\bigzhongkuohao{ \begin{smallmatrix}
	0 \\ 1 \\ 0
\end{smallmatrix}  }}$, $\mathbf a_{3,1} = \fold{\bigzhongkuohao{ \begin{smallmatrix}
0 \\ 0\\ 1
\end{smallmatrix}  }}$, and $\mathbf a_{i,j}=\boldsymbol{0}$ for other pairs of $(i,j)$.  Let $\mathcal B = \mathcal A  + \mathcal I_{333}$.  Then $\mathbf b_{1,1}=\mathbf b_{2,2}=\mathbf b_{3,3} = \fold{\bigzhongkuohao{ \begin{smallmatrix}
1 \\0 \\ 0
\end{smallmatrix}  }}$ while $\mathbf b_{i,j}=\mathbf a_{i,j}$ for other pairs of $(i,j)$. Let $C=\mathcal B^2$. After computation we get
\[
\mathcal C^{(1)} = \bigzhongkuohao{\begin{matrix}
		1 & 2 & 0\\
		1 & 1 & 0\\
		0 & 1 & 1
\end{matrix}},
\mathcal C^{(2)} = \bigzhongkuohao{\begin{matrix}
		0 & 0 & 1 \\
		0 & 0 & 2\\
		0 & 0 & 0
\end{matrix}},
\mathcal C^{(3)} = \bigzhongkuohao{\begin{matrix}
		0 & 0 & 0\\
		0 & 0 & 0\\
		2 & 0 & 0
\end{matrix}},
\]
namely, $\mathcal C\in \Knnppp$.  Thus $\mathcal A$ is irreducible. 
\end{example}

\begin{example}
	Consider $\mathcal A =\bigxiaokuohao{\mathbf a_{i,j}}\in \mathbb K^{3\times 3 }_3$ with 
	\[
	\mathcal A^{(1)} = \bigzhongkuohao{\begin{matrix}
			0 & 1 & 0\\ 
			0 & 0 & 0\\
			0 & 0& 0
	\end{matrix}},
	\mathcal A^{(2)} = \bigzhongkuohao{\begin{matrix}
			0 & 0 & 0\\
			0 & 0 & 1\\
			0 & 0& 0\\
	\end{matrix}},
	\mathcal A^{(3)}= \bigzhongkuohao{\begin{matrix}
			0 & 0 & 0\\
			0 & 0 & 0\\
			0 & 0& 0\\
	\end{matrix}};
	\]
	that is, $\mathbf a_{1,2} = \fold{\bigzhongkuohao{\begin{smallmatrix}
				1\\0 \\ 0
	\end{smallmatrix}}}$, $\mathbf a_{2,3} = \fold{\bigzhongkuohao{ \begin{smallmatrix}
				0 \\ 1 \\ 0
	\end{smallmatrix}  }}$,  and $\mathbf a_{i,j}=\boldsymbol{0}$ for other pairs of $(i,j)$.  Let $\mathcal B = \mathcal A  + \mathcal I_{333}$.  Then $\mathbf b_{1,1}=\mathbf b_{2,2}=\mathbf b_{3,3} = \fold{\bigzhongkuohao{ \begin{smallmatrix}
				1 \\0 \\ 0
	\end{smallmatrix}  }}$ while $\mathbf b_{i,j}=\mathbf a_{i,j}$ for other pairs of $(i,j)$. Let $C=\mathcal B^2$. Then
	\[
	\mathcal C^{(1)} = \bigzhongkuohao{\begin{matrix}
			1 & 2 & 0\\
			0& 1 & 0\\
			0 & 0 & 1
	\end{matrix}},
	\mathcal C^{(2)} = \bigzhongkuohao{\begin{matrix}
			0 & 0 & 1 \\
			0 & 0 & 1\\
			0 & 0 & 0
	\end{matrix}},
	\mathcal C^{(3)} = \bigzhongkuohao{\begin{matrix}
			0 & 0 & 0\\
			0 & 0 & 0\\
			0 & 0 & 0
	\end{matrix}},
	\]
from which we see that $\mathbf c_{2,1}=\mathbf c_{3,1}=\mathbf c_{3,2}=\boldsymbol{0}$,	namely, $\mathcal C\in \Knnpp\setminus \Knnppp$.  Thus $\mathcal A$ is reducible. 
\end{example}

\begin{remark}
	Note that the irreducibility of $\mathcal A$ does not mean that $\bcirc{\mathcal A}$ is an irreducible matrix; an example is given in Example \ref{ex:pf_thm_1}. This leads to that some nice properties of nonnegative irreducible matrices do not hold for nonnegative irreducible tubal matrices. This will be studied in the next section. 
\end{remark}


 \section{Perron-Frobenius Theorem for Nonnegative Tubal Matrices}\label{sec:pf_thm}
 The t-eigenvalues and t-eigenvectors were defined as follows. 
 \begin{definition}(c.f. \cite{miao2021t,liu2020study})
 	\label{def:t_eig}
 	Let $\mathcal A\in \Knnp$. If there exists a scalar $\lambda\in \mathbb C$ and a tubal vector $X\in \mathbb C^{n\times 1\times p}$ such that
 	\[
 	\mathcal A\tprod X = \lambda X,
 	\]
 	then $\lambda$ is called a t-eigenvalue of $\mathcal A$, with its associated t-eigenvector. The set of t-eigenvalues of $\mathcal A$ is denoted as $\spec{\mathcal A}$, and the spectral radius is denoted as $\rho(\mathcal A): = \max\{ |\lambda| \mid \lambda \in \spec{\mathcal A}  \}$.
 \end{definition}
\begin{remark}\label{rmk:t_eig}
From the definition of t-product, $\mathcal A\tprod X = \lambda X$ if and only if $\bcirc{\mathcal A}\unfold{X} = \lambda \unfold{X}$.  Thus $\spec{\mathcal A} = \spec{\bcirc{\mathcal A}}$ and $\rho(\mathcal A) = \rho(\bcirc{\mathcal A})$. 
\end{remark}

Similar to the matrix case, in this paper we call $X$ in Definition \ref{def:t_eig} the right t-eigenvector of $\mathcal A$, and we can define left t-eigenvectors. 
\begin{definition}
	If there is a scalar $\lambda\in C$ and a tubal vector $Y\in \mathbb C^{n\times 1\times p}$ such that
	\[
	\mathcal A^{\top} \tprod Y = \lambda Y,
	\]
	then $Y$ is called a right t-eigenvector of $\mathcal A$. 
\end{definition}
\begin{remark}
	Using \eqref{prop:transpose}, $\unfold{Y}$ is a left eigenvector of $\bcirc{\mathcal A}^{\top}$. Thus $\spec{\mathcal A^{\top}} = \spec{\mathcal A}$, and $\rho(\mathcal A^{\top}) = \rho(\mathcal A)$.
\end{remark}

When context permits, we use the t-eigenvector to mean a right t-eigenvector.

\begin{theorem}[Perron-Frobenius theorem for nonnegative tubal matrices]\label{th:weak_pf_thm_tubal_matrix}
	Let $\mathcal A\in \Knpp$, $\mathcal A\neq \boldsymbol{0}$ be a nonnegative tubal matrix. Then $\rho(\mathcal A)$ is a t-eigenvalue of $\mathcal A$, with a nonnegative t-eigenvector $X\in\Knpp$, $X\neq \boldsymbol{0}$ corresponding to it. 
\end{theorem}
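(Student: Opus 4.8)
The plan is to reduce the statement to the classical Perron--Frobenius theorem for nonnegative matrices via the block circulant representation. By Remark \ref{rmk:t_eig}, a pair $(\lambda, X)$ satisfies $\mathcal A\tprod X = \lambda X$ if and only if $\bcirc{\mathcal A}\unfold{X} = \lambda\unfold{X}$, and moreover $\rho(\mathcal A) = \rho(\bcirc{\mathcal A})$. Since $\fold{\cdot}$ and $\unfold{\cdot}$ are mutually inverse bijections, producing a nonnegative Perron eigenpair of the ordinary matrix $\bcirc{\mathcal A}$ and transporting it back through $\fold{\cdot}$ will suffice.

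First I would observe that, because $\mathcal A$ is a nonnegative tubal matrix, every frontal slice $\mathcal A^{(i)}$ is an entrywise nonnegative matrix; hence, by the definition in \eqref{def:bcirc_unfold}, $\bcirc{\mathcal A}\in\mathbb R^{np\times np}$ is a genuine nonnegative matrix, and it is nonzero since $\mathcal A\neq\boldsymbol 0$. Applying the weak form of the classical Perron--Frobenius theorem to $\bcirc{\mathcal A}$ then yields that $\rho(\bcirc{\mathcal A})$ is an eigenvalue of $\bcirc{\mathcal A}$ possessing a nonnegative real eigenvector $v\in\mathbb R^{np}$ with $v\neq 0$. Because $\rho(\mathcal A)=\rho(\bcirc{\mathcal A})$, the value $\rho(\mathcal A)$ is already a t-eigenvalue of $\mathcal A$, so the only remaining task is to exhibit a corresponding nonnegative tubal eigenvector.

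It remains to fold $v$ correctly. I would set $X:=\fold{v}\in\mathbb R^{n\times 1\times p}$, so that $\unfold{X}=v$ and $\bcirc{\mathcal A}\unfold{X}=\rho(\mathcal A)\unfold{X}$; Remark \ref{rmk:t_eig} then gives $\mathcal A\tprod X=\rho(\mathcal A)X$ with $X\neq\boldsymbol 0$. The one point that must be verified with care is that $X$ is nonnegative in the precise sense of Definition \ref{def:nonnegative_tubal_vectors}, rather than merely the fold of a nonnegative vector. Here I would use that $\unfold{X}$ stacks the frontal slices $X^{(1)},\ldots,X^{(p)}$, so $v\geq 0$ is equivalent to each frontal slice $X^{(i)}$ being entrywise nonnegative; and the $j$-th tube $\mathbf x_j$ of $X$ collects exactly the $j$-th entries of $X^{(1)},\ldots,X^{(p)}$, whence each $\mathbf x_j\in\Kpp$ and therefore $X\in\Knpp$.

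This correspondence between nonnegativity of the stacked vector $v$ and nonnegativity of every tube of $X$ is the only genuinely tubal ingredient, and it is precisely what allows this weak conclusion to transfer without loss; I do not expect any deeper obstacle, since the equivalence in Remark \ref{rmk:t_eig} does all the heavy lifting. By contrast, the sharper Perron--Frobenius conclusions (simplicity of $\rho(\mathcal A)$, strict positivity of the eigenvector, and so on) require irreducibility and will not follow from this argument alone, which is consistent with $\bcirc{\mathcal A}$ being reducible even when $\mathcal A$ is irreducible.
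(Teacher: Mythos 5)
Your proof is correct and follows essentially the same route as the paper: pass to $\bcirc{\mathcal A}$, apply the classical Perron--Frobenius theorem for nonnegative matrices, and fold the resulting nonnegative eigenvector back using the equivalence in Remark \ref{rmk:t_eig}. The extra care you take in checking that nonnegativity of the stacked vector $v$ is equivalent to every tube of $X=\fold{v}$ lying in $\Kpp$ is a welcome detail that the paper leaves implicit.
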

\begin{proof}
	Since $\mathcal A\in\Knnpp$, $\bcirc{\mathcal A} \in\mathbb R^{np\times np}$ is also a nonnegative matrix, By the Perron-Frobenius theorem for nonnegative matrices, $\rho(\bcirc{\mathcal A})$ is an eigenvalue of $\bcirc{\mathcal A}$, with a nonnegative eigenvector $\mathbf x\in\mathbb R^{np}$, $\mathbf x\neq \boldsymbol{0}$ corresponding to it. Noticing Remark \ref{rmk:t_eig} and folding $\mathbf x$ to a tubal vector $X\in\Knpp$ give the desired result. 
\end{proof}
\begin{remark}
	Theorem \ref{th:weak_pf_thm_tubal_matrix} also holds for the right t-eigenvectors of $\mathcal A$. 
\end{remark}

Next we consider PF theorem for nonnegative irreducible tubal matrices. Some preparations are needed.

\begin{lemma}\label{lem:irreducible_inequality_2_equality}
	Let $\mathcal A\in \Knnpp$, $\mathcal A\neq \boldsymbol{0}$ admit a positive left t-eigenvector $Y\in \Knppp$ corresponding to $\rho(\mathcal A)$. If $X\in\Knp$, $X\neq \boldsymbol{0}$ satisfies 
	$\mathcal A\tprod X-\rho(\mathcal A) X \in\Knpp$, then $\mathcal A\tprod X=\rho(\mathcal A)X$, 
	i.e, $X$ is a t-eigenvector corresponding to $\rho(\mathcal A)$.
\end{lemma}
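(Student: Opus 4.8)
The plan is to mimic the classical matrix argument, pairing the candidate vector $X$ against the given positive left t-eigenvector $Y$. First I would set $Z := \mathcal A\tprod X - \rho(\mathcal A)X$, so that the hypothesis reads precisely $Z\in\Knpp$, and the entire task reduces to proving $Z=\boldsymbol{0}$, since this is exactly the statement $\mathcal A\tprod X=\rho(\mathcal A)X$.

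The key step is to take the t-inner product of $Z$ with $Y$. To do this cleanly I would first transpose the left-eigenvector relation $\mathcal A^{\top}\tprod Y=\rho(\mathcal A)Y$: using $(\mathcal A\tprod\mathcal B)^{\top}=\mathcal B^{\top}\tprod\mathcal A^{\top}$ together with $(\mathcal A^{\top})^{\top}=\mathcal A$, it becomes $Y^{\top}\tprod\mathcal A=\rho(\mathcal A)Y^{\top}$. Then, invoking associativity of the t-product and distributivity over $+$, I would compute
\[
Y^{\top}\tprod Z = Y^{\top}\tprod\mathcal A\tprod X-\rho(\mathcal A)\,(Y^{\top}\tprod X) = \rho(\mathcal A)\,(Y^{\top}\tprod X)-\rho(\mathcal A)\,(Y^{\top}\tprod X)=\boldsymbol{0},
\]
where I use that $\rho(\mathcal A)=\rho(\bcirc{\mathcal A})\ge 0$ is a real scalar and therefore commutes with every tubal operation.

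It remains to deduce $Z=\boldsymbol{0}$ from $Y^{\top}\tprod Z=\boldsymbol{0}$, and here I would argue by contradiction. If $Z\neq\boldsymbol{0}$, then since $Y\in\Knppp$ is positive and $Z\in\Knpp$ is a nonzero nonnegative tubal vector, Proposition \ref{prop:t_prod_positive_tubal_vector_times_nonnegative_tubal_vector} forces $Y^{\top}\tprod Z\in\Kppp$, i.e.\ a tubal scalar of nonzero modulus, contradicting the equality $Y^{\top}\tprod Z=\boldsymbol{0}$ just derived. Hence $Z=\boldsymbol{0}$, which is exactly $\mathcal A\tprod X=\rho(\mathcal A)X$, so $X$ is a t-eigenvector for $\rho(\mathcal A)$.

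I do not anticipate a serious obstacle: the argument is the tubal analogue of the standard fact that a nonnegative subinvariant vector paired with a strictly positive invariant covector must be invariant. The points needing care are purely algebraic—checking that the transpose identity converts the left-eigenvector equation into $Y^{\top}\tprod\mathcal A=\rho(\mathcal A)Y^{\top}$, that associativity and real-scalar commutation legitimize the cancellation in the display, and, most importantly, that the nonstandard notion of positivity used in this paper is still strong enough for the final step. This last point is exactly what Proposition \ref{prop:t_prod_positive_tubal_vector_times_nonnegative_tubal_vector} supplies: a positive left tubal vector annihilates a nonnegative tubal vector under $\tprod$ only when the latter is $\boldsymbol{0}$.
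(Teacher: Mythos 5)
Your proof is correct and follows essentially the same route as the paper's: both pair the nonnegative residual $Z=\mathcal A\tprod X-\rho(\mathcal A)X$ against the positive left t-eigenvector $Y$, use $Y^{\top}\tprod\mathcal A=\rho(\mathcal A)Y^{\top}$ to get $Y^{\top}\tprod Z=\boldsymbol{0}$, and invoke Proposition \ref{prop:t_prod_positive_tubal_vector_times_nonnegative_tubal_vector} to conclude $Z=\boldsymbol{0}$. Your write-up is merely more explicit about the transpose identity and the scalar commutation than the paper's terse version.
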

\begin{proof}Since $\mathcal A\tprod X-\rho(\mathcal A) X \in\Knpp$, if it is identically zero, then we are done. Otherwise, since $Y\in\Knppp$,  by Proposition \ref{prop:t_prod_positive_tubal_vector_times_nonnegative_tubal_vector}
we have $Y^{\top}\tprod \bigxiaokuohao{\mathcal A \tprod X- \rho(\mathcal A)X} \in \Kppp$.  However,  as $Y^{\top}\tprod \mathcal A = \rho(\mathcal A)Y^{\top}$, we have $Y^{\top}\tprod \bigxiaokuohao{\mathcal A \tprod X- \rho(\mathcal A)X} =\boldsymbol{0}$, deducing a contradiction. Thus the assertion holds. 
\end{proof}

For $\mathcal A\in \mathbb C^{n\times n\times p}$, denote $|\mathcal A|\in\Knnpp$ as the tubal matrix whose every element is the magnitude of the corresponding element of $\mathcal A$.  The same notation applies also to tubal vectors and scalars, and the usual matrices, vectors, and scalars. 
The following lemma is useful.
\begin{lemma}
	\label{lem:magnitude_larger}
	If $\mathcal A\in \mathbb C^{n\times n\times p}$ and $X\in \mathbb C^{n\times 1\times p}$, then $ \bigjueduizhi{\mathcal A}\tprod\bigjueduizhi{X}- \bigjueduizhi{\mathcal A\tprod X} \in \Knpp$.
\end{lemma}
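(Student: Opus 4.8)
The plan is to reduce the statement to the standard entrywise triangle inequality for ordinary complex matrix--vector products, transported through the block-circulant representation of the t-product introduced in \eqref{def:bcirc_unfold}.

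First I would record two elementary identities describing how the entrywise magnitude interacts with the operators $\bcirc{\cdot}$ and $\unfold{\cdot}$. Since $\bigjueduizhi{\mathcal A}$ is formed by replacing each frontal-slice entry of $\mathcal A$ by its modulus, its frontal slices are exactly $\bigjueduizhi{\mathcal A^{(i)}}$, the entrywise magnitudes of the frontal slices of $\mathcal A$; the analogous remark holds for $\bigjueduizhi{X}$. Because $\bcirc{\mathcal A}$ and $\unfold{X}$ merely rearrange and replicate the frontal slices of $\mathcal A$ and $X$ without ever adding or multiplying entries, taking the entrywise magnitude commutes with these operators, so that
\begin{align*}
\bcirc{\bigjueduizhi{\mathcal A}} = \bigjueduizhi{\bcirc{\mathcal A}}, \qquad \unfold{\bigjueduizhi{X}} = \bigjueduizhi{\unfold{X}},
\end{align*}
where the magnitudes on the right are now taken entrywise on the ordinary matrix $\bcirc{\mathcal A}\in\mathbb C^{np\times np}$ and the ordinary vector $\unfold{X}\in\mathbb C^{np}$.

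Next I would invoke the definition $\mathcal A\tprod X = \fold{\bcirc{\mathcal A}\unfold{X}}$ together with the classical entrywise bound $\bigjueduizhi{Mv}\le \bigjueduizhi{M}\,\bigjueduizhi{v}$ for any complex matrix $M$ and vector $v$, itself a direct consequence of $|\sum_k m_k v_k|\le \sum_k |m_k|\,|v_k|$. Applying this with $M=\bcirc{\mathcal A}$ and $v=\unfold{X}$, and using the two commutation identities above, gives the entrywise chain
\begin{align*}
\unfold{\bigjueduizhi{\mathcal A\tprod X}} = \bigjueduizhi{\bcirc{\mathcal A}\unfold{X}} \le \bigjueduizhi{\bcirc{\mathcal A}}\,\bigjueduizhi{\unfold{X}} = \bcirc{\bigjueduizhi{\mathcal A}}\unfold{\bigjueduizhi{X}} = \unfold{\bigjueduizhi{\mathcal A}\tprod\bigjueduizhi{X}}.
\end{align*}
Since $\fold{\cdot}$ inverts $\unfold{\cdot}$ and preserves the entrywise order, folding both sides yields $\bigjueduizhi{\mathcal A}\tprod\bigjueduizhi{X}-\bigjueduizhi{\mathcal A\tprod X}\in\Knpp$, which is the claim.

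I do not expect a serious obstacle; the only point that genuinely needs care is verifying the commutation identities $\bcirc{\bigjueduizhi{\mathcal A}}=\bigjueduizhi{\bcirc{\mathcal A}}$ and $\unfold{\bigjueduizhi{X}}=\bigjueduizhi{\unfold{X}}$, which hold precisely because the modulus is applied entrywise while $\bcirc{\cdot}$ and $\unfold{\cdot}$ never combine entries. As an alternative, purely intrinsic route that avoids $\bcirc{\cdot}$ altogether, one could first prove the tubal-scalar inequality $\bigjueduizhi{\mathbf a\tprod\mathbf b}\le\bigjueduizhi{\mathbf a}\tprod\bigjueduizhi{\mathbf b}$ directly from $\mathbf a\tprod\mathbf b=\vcirc{\mathbf a}\mathbf b$ and the triangle inequality, and then sum over $j$ in the expansion $\bigxiaokuohao{\mathcal A\tprod X}_i=\sum_{j=1}^n \mathbf a_{i,j}\tprod\mathbf x_j$; this reaches the same conclusion at the cost of slightly more index bookkeeping.
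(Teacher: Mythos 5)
Your argument is correct and is essentially the paper's own proof: the paper likewise passes to $\bcirc{\mathcal A}\unfold{X}$ and applies the entrywise bound $\bigjueduizhi{\bcirc{\mathcal A}\unfold{X}}\leq \bigjueduizhi{\bcirc{\mathcal A}}\,\bigjueduizhi{\unfold{X}}$, with your commutation identities being the (implicit) details that make the reduction rigorous. No gaps; your write-up simply spells out the steps the paper leaves to the reader.
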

\begin{proof}
By noting the relation between $\mathcal A\tprod X$ and $\bcirc{\mathcal A} \unfold{X}$, and realizing that $\bigjueduizhi{ \bcirc{\mathcal A}\unfold{X}  } \leq \bigjueduizhi{ \bcirc{\mathcal A} }\bigjueduizhi{ \unfold{X} }$, the result follows. 
\end{proof}

\begin{theorem}[Perron-Frobenius theorem for nonnegative irreducible tubal matrices]
	\label{th:pf_thm_irreducible_tubal_matrix}
	Let $\mathcal A\in \Knnpp$ be  a nonnegative irreducible tubal matrix. Then 
	\begin{enumerate}
		\item $\rho(\mathcal A)$ is a t-eigenavlue of $\mathcal A$;
		\item $\rho(\mathcal A)>0$;
		\item Every  nonnegative t-eigenvector $X\in\Knpp$, $X\neq \boldsymbol{0}$,  must be a positive t-eigenvector, i.e., $X\in\Knppp$;
		\item There exists a positive t-eigenvector $X\in\Knppp$ corresponding to $\rho(\mathcal A)$;
		\item All the t-eigenvectors corresponding to $\lambda$ with $|\lambda|=\rho(\mathcal A)$ do not contain zero tubal scalars;
		\item All the real t-eigenvectors corresponding to $\rho(\mathcal A)$ cannot take the following form: Some of its tubal elements are positive, while the other ones are not;
		\item if $\lambda$ is a t-eigenvalue with a strongly positive t-eigenvector $Y\in \Knpppp$, then $\lambda = \rho(\mathcal A)$.
	\end{enumerate}
All the above results hold for the left t-eigenvectors. 
\end{theorem}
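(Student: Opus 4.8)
The plan is to reduce each of the seven assertions to the already-established facts about nonnegative tubal matrices, to the block-circulant identification $\spec{\mathcal A}=\spec{\bcirc{\mathcal A}}$ from Remark \ref{rmk:t_eig}, and above all to the positivity of the power $\mathcal C:=\bigxiaokuohao{\mathcal A+\Identity}^{n-1}\in\Knnppp$ guaranteed by Theorem \ref{th:irreducible}. Throughout I keep in mind that here ``positive'' means a nonnegative tube that is not identically zero, which is weaker than entrywise positivity. The first assertion is immediate from the nonnegative Perron--Frobenius Theorem \ref{th:weak_pf_thm_tubal_matrix}, which produces a nonnegative eigenvector $X\in\Knpp$, $X\neq\boldsymbol{0}$, for $\rho(\mathcal A)$ (here I use that an irreducible $\mathcal A$ is necessarily nonzero).

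For the mechanism behind assertions 2--4 I would first prove assertion 3. If $X\in\Knpp$, $X\neq\boldsymbol{0}$, satisfies $\mathcal A\tprod X=\lambda X$, then $\lambda$ is real and nonnegative (comparing a strictly positive frontal entry of $X$ with the corresponding nonnegative entry of $\mathcal A\tprod X$), and applying $\bigxiaokuohao{\mathcal A+\Identity}$ repeatedly gives $\mathcal C\tprod X=(\lambda+1)^{n-1}X$. Since $\mathcal C\in\Knnppp$ and $X\neq\boldsymbol{0}$, Proposition \ref{prop:positive_tubal_matrix_times_tubal_vector} forces $\mathcal C\tprod X\in\Knppp$, whence $X\in\Knppp$; this is assertion 3, and combined with assertion 1 it yields the positive eigenvector of assertion 4. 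Assertion 2 then follows by contradiction: were $\rho(\mathcal A)=0$, the eigenvector of assertion 1 would satisfy $\mathcal A\tprod X=\boldsymbol{0}$ with $X\in\Knppp$ (by assertion 3), so Proposition \ref{prop:zero_tubal_matrix_times_positive_tubal_vector} would give $\mathcal A=\boldsymbol{0}$, contradicting irreducibility.

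The remaining assertions rest on a magnitude estimate together with the equality criterion of Lemma \ref{lem:irreducible_inequality_2_equality}; to invoke the latter I first produce a positive left eigenvector by applying assertions 1--4 to $\mathcal A^{\top}$, which is again nonnegative and irreducible (Proposition \ref{prop:irreducible_transpose}) with $\rho(\mathcal A^{\top})=\rho(\mathcal A)$. For assertion 5, let $\mathcal A\tprod X=\lambda X$ with $|\lambda|=\rho(\mathcal A)$. Since $\mathcal A=|\mathcal A|$, Lemma \ref{lem:magnitude_larger} gives $\mathcal A\tprod|X|-\rho(\mathcal A)|X|=\mathcal A\tprod|X|-|\mathcal A\tprod X|\in\Knpp$, so Lemma \ref{lem:irreducible_inequality_2_equality} upgrades this to $\mathcal A\tprod|X|=\rho(\mathcal A)|X|$; thus $|X|$ is a nonnegative eigenvector, hence positive by assertion 3, and since a tube of $X$ vanishes exactly when the corresponding tube of $|X|$ does, $X$ has no zero tube. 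For assertion 6 I would again make $|X|$ a $\rho(\mathcal A)$-eigenvector by the same equality, observe that for real $X$ both $|X|+X$ and $|X|-X$ are nonnegative eigenvectors for $\rho(\mathcal A)$, and note that if some tube $\mathbf x_{i_0}$ were positive then $(|X|-X)_{i_0}=\boldsymbol{0}$, forcing $|X|-X=\boldsymbol{0}$ by assertion 3 and hence $X=|X|\in\Knppp$ positive everywhere, which rules out the mixed form.

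Finally, assertion 7 is the standard left--right pairing: with a positive left eigenvector $Z\in\Knppp$ for $\rho(\mathcal A)$ and a strongly positive eigenvector $Y\in\Knpppp$ for $\lambda$, computing $Z^{\top}\tprod\mathcal A\tprod Y$ in two ways gives $(\lambda-\rho(\mathcal A))\,(Z^{\top}\tprod Y)=\boldsymbol{0}$; since $Z\neq\boldsymbol{0}$ is nonnegative and $Y$ is strongly positive, Propositions \ref{prop:t_prod_nonnegative_tubal_scalars} and \ref{prop:t_prod_positive_tubal_vector_times_nonnegative_tubal_vector} give $Z^{\top}\tprod Y\neq\boldsymbol{0}$, forcing the ordinary scalar $\lambda-\rho(\mathcal A)$ to vanish. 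The analogues for left t-eigenvectors then follow by applying everything to $\mathcal A^{\top}$. The main obstacle I anticipate is organizing the dependencies cleanly: ensuring that the positive left eigenvector needed by Lemma \ref{lem:irreducible_inequality_2_equality} (and hence in assertions 5 and 6) is supplied by assertions 1--4 for $\mathcal A^{\top}$ without circularity, and handling carefully that ``positive'' is weaker than entrywise positivity, so that the magnitude equality and the $|X|\pm X$ splitting interact correctly with assertion 3.
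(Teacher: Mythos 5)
Your proof is correct, and its overall scaffolding coincides with the paper's (Item 1 from Theorem \ref{th:weak_pf_thm_tubal_matrix}; left t-eigenvectors obtained by applying everything to $\mathcal A^{\top}$ via Proposition \ref{prop:irreducible_transpose}; Item 5 from Lemma \ref{lem:magnitude_larger} plus Lemma \ref{lem:irreducible_inequality_2_equality}), but three of your arguments take genuinely different routes. For Items 2--3 the paper argues directly from Definition \ref{def:irreducible_tubal_matrices}: it sets $I=\{i:\mathbf x_i=\boldsymbol{0}\}$, reads off $\mathbf a_{i,j}\tprod\mathbf x_j=\boldsymbol{0}$ termwise from nonnegativity, and contradicts irreducibility via Proposition \ref{prop:t_Prod_positive_tubal_scalar_times_zero_scalar}; you instead route everything through $\bigxiaokuohao{\mathcal A+\Identity}^{n-1}\in\Knnppp$ (Theorem \ref{th:irreducible}) and Proposition \ref{prop:positive_tubal_matrix_times_tubal_vector}, the classical matrix-PF mechanism, which works at the cost of first checking $\lambda\geq 0$ (your justification is fine) and of invoking the heavier Theorem \ref{th:irreducible}. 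For Item 6 the paper again runs the index-set contradiction on $\sum_j\mathbf a_{i,j}\tprod\bigxiaokuohao{|\mathbf x_j|-\mathbf x_j}=\boldsymbol{0}$, whereas your observation that $|X|-X$ is itself a nonnegative $\rho(\mathcal A)$-eigenvector with a zero tube, hence $\boldsymbol{0}$ by Item 3, is shorter and equally valid. The most interesting divergence is Item 7: the paper uses the extremal quantity $\delta_{X,Y}=\sup\{s\geq 0: Y-sX\in\Knpp\}$, while you use the pairing $(\lambda-\rho(\mathcal A))\,Z^{\top}\tprod Y=\boldsymbol{0}$ together with $Z^{\top}\tprod Y\in\Kppp$ from Proposition \ref{prop:t_prod_positive_tubal_vector_times_nonnegative_tubal_vector}. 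Your argument is sound, but notice that it nowhere uses the strong positivity of $Y$: Proposition \ref{prop:t_prod_positive_tubal_vector_times_nonnegative_tubal_vector} only needs $Y\in\Knpp$, $Y\neq\boldsymbol{0}$, so you have in fact proved the full analogue of Item 3 of the matrix PF theorem (any t-eigenvalue admitting a nonzero nonnegative t-eigenvector equals $\rho(\mathcal A)$). That is strictly stronger than the stated Item 7 and is in tension with the paper's subsequent remark that this item ``cannot be completely generalized'' (for which no counterexample is given); you should state explicitly that you are proving more, rather than leaving the discrepancy implicit.
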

\begin{proof}
	Item 1 follows from Theorem \ref{th:weak_pf_thm_tubal_matrix}. 
	
	Let $\mathcal A\tprod X=\rho(\mathcal A) X$ with $X\in\Knpp$, $X\neq \boldsymbol{0}$ the corresponding nonnegative t-eigenvector. To show $\rho(\mathcal A)>0$, it suffices to show that $\mathcal A\tprod X\neq \boldsymbol{0}$.  Represent $\mathcal A = (\mathbf a_{i,j}), X=(\mathbf x_i)$, $\mathbf a_{i,j},\mathbf x_i\in \Knpp$. Let $I:=  \bigdakuohao{ i \mid \mathbf x_i=\boldsymbol{0}  }$, and $\bar I = [n]\setminus I$. If $\mathcal A\tprod X \equiv \boldsymbol{0}$, then
	\[
	\sum^n_{j=1}\mathbf a_{i,j}\tprod \mathbf x_j = \boldsymbol{0},~\forall i \in [n],
	\]
	which together with the nonnegativity of $\mathbf a_{i,j}$ and $\mathbf x_j$ gives that
	\[
	\mathbf a_{i,j}\tprod\mathbf x_j = 0,\forall i\in [n], \forall j\in [n]. 
	\]
	In particular, when $j\in \bar I$, $\mathbf x_j \in \Knppp$, which by Proposition \ref{prop:t_Prod_positive_tubal_scalar_times_zero_scalar} implies that
	\[
	\mathbf a_{i,j} = \boldsymbol{0}, ~\forall i \in [n], \forall j \in \bar I,
	\]
	which contradicts the irreducibility of $\mathcal A$. Thus $\mathcal A\tprod X\neq \boldsymbol{0}$, and so $\rho(\mathcal A)>0$.  This proves Item 2.
	
	Let $X\in\Knpp$, $X\neq \boldsymbol{0}$ be a nonnegative t-eigenvector corresponding to a t-eigenvalue $\lambda$. Similar to the proof of Item 2 we have $\lambda>0$. 
	If $X\not\in \Knppp$, then let $I$ and $\bar I$ be defined as those in Item 2. Then 
	\[
	\boldsymbol{0} = \lambda\mathbf x_i = \sum^n_{j=1}\mathbf a_{i,j}\tprod\mathbf x_j,~\forall i\in I~\Leftrightarrow \boldsymbol{0}=\mathbf a_{i,j}\tprod\mathbf x_j,\forall i\in I.
	\]
	Since when $j \in \bar I$, $\mathbf x_j\in \Knppp$, it follows again from Proposition \ref{prop:t_Prod_positive_tubal_scalar_times_zero_scalar} that
	\[
	\mathbf a_{i,j} = \boldsymbol{0},~\forall i\in I,\forall j\in \bar I,
	\]
	deducing a contradiction. Thus $I$ is empty and so $X\in \Knppp$. This proves Item 3.
	
	Item 4 follows from Theorem \ref{th:weak_pf_thm_tubal_matrix} and Item 3. 
	
	To prove Item 5, let $X \in \mathbb C^{n\times 1\times p}$ be a t-eigenvector with t-eigenvalue $\lambda\in \mathbb C$ and $|\lambda|=\rho(\mathcal A)$, such that $\mathcal A\tprod X = \lambda X$. By Lemma \ref{lem:magnitude_larger},
	\begin{align}\label{eq:proof_pf_thm:1}
	\mathcal A\tprod\bigjueduizhi{X} -   \rho(\mathcal A)\bigjueduizhi{X} = \mathcal A\tprod \bigjueduizhi{X} - \bigjueduizhi{\lambda X} = \mathcal A\tprod \bigjueduizhi{X} - \bigjueduizhi{\mathcal A\tprod X} \in \Knpp. 
	\end{align}
	It follows from Proposition \ref{prop:irreducible_transpose} that $\mathcal A^{\top}$ is also nonnegative irreducible; then Item 4 tells us that there is a positive left t-eigenvector $Y\in\Knppp$  corresponding to $\rho(\mathcal A)$ of $\mathcal A^{\top}$. Applying Lemma \ref{lem:irreducible_inequality_2_equality} to \eqref{eq:proof_pf_thm:1} shows that
	\[
	\mathcal A\tprod \bigjueduizhi{X} = \rho(\mathcal A) \bigjueduizhi{X},
	\]
	i.e., $|X|$ is a nonnegative t-eigenvector of $\mathcal A$. Then Item 3 shows that $|X|\in\Knppp$, namely, $|X|$ does not contain zero tubal scalars. The same situation also holds for $X$. 
	
	We then prove Item 6. Let $X\in \Knp$ with $\mathcal A\tprod X = \rho(\mathcal A)X$. 
	Write $X=(\mathbf x_i)$. Item 5 tells us that $\mathbf x_i\neq \boldsymbol{0}$, $i=1,\ldots,n$. If some of $\mathbf x_i \in \Kppp$ while the other ones are not, 
	without loss of generality, we assume that
		\begin{align*}
	\mathbf x_i	\begin{cases}
		\in \Kppp,& i \in I \subset [n] , I\neq \emptyset,\\
			\not\in \Kpp,& i \in \bar I.
		\end{cases}
	\end{align*} 
On the other hand, the proof of Item 5 shows that 
\[
\mathcal A\tprod \bigjueduizhi{X} = \rho(\mathcal A)\bigjueduizhi{X} = \bigjueduizhi{\rho(\mathcal A)X} = \bigjueduizhi{\mathcal A\tprod X}. 
\]
Then, if $i\in \Kppp$, we have
\[
\bigxiaokuohao{\mathcal A\tprod X}_i = \rho(\mathcal A)\mathbf x_i = \rho(\mathcal A) \bigjueduizhi{\mathbf x_i} = \bigjueduizhi{ \bigxiaokuohao{\mathcal A\tprod X}_i  } = \bigxiaokuohao{\mathcal A\tprod \bigjueduizhi{X}}_i,
\]
which means that
\[
\sum^n_{j=1}\mathbf a_{i,j}\tprod \bigxiaokuohao{ \bigjueduizhi{\mathbf x_j} -\mathbf x_j} = \boldsymbol{0}, ~\forall i\in I. 
\]
It follows from $|\mathbf x_j| - \mathbf x_j\in \Kpp$ that
\[
\mathbf a_{i,j}\tprod\bigxiaokuohao{ |\mathbf x_j| - \mathbf x_j  } = \boldsymbol{0},~\forall i\in I. 
\]
Note that $\mathbf x_j\not\in \Kpp$ when $j\in \bar I$; i.e.,  there exist some $\mathbf x_j^{(k)}<0$, which implies that $|\mathbf x_j| - \mathbf x_j \in \Kppp$.   Proposition \ref{prop:t_Prod_positive_tubal_scalar_times_zero_scalar} then tells us that
\[
\mathbf a_{i,j} = \boldsymbol{0},~\forall i\in I,\forall j\in \bar I,
\]
contradicting the irreducibility of $\mathcal A$.  This completes the proof of Item 6.
	
To show Item 7,	let $\mathcal A\tprod Y = \lambda  Y$ with $Y\in\Knpppp$. Similar to the   proof of Item 2, we have $\lambda>0$. Define $$\delta_{X,Y}:= \sup_{s\geq 0} \left\{s   \mid Y- s X \in \Knpp  \right\}.$$      Then $\delta_{X,Y}>0$ since $Y\in \Knpppp$. Thus
	\[
	\lambda Y - \rho(\mathcal A) \delta_{X,Y} X = \mathcal A\tprod\bigxiaokuohao{ Y - \delta_{X,Y} X} \in  \Knpp,  
	\]
	where the last relation follows from Proposition \ref{prop:nonnegative_tubal_matrix_times_nonnegative_tubal_vector}.  Since $\lambda>0$, this means that $Y-\frac{\rho(\mathcal A)}{\lambda} \delta_{X,Y} X \in \Knpp$, which together with the definition of $\delta_{X,Y}$ implies that $\rho(\mathcal A) / \lambda \leq 1$. Thus we must have $\rho(\mathcal A) = \lambda$. 
\end{proof}
 
We present an example to illustrate Theorem \ref{th:pf_thm_irreducible_tubal_matrix}.
\begin{example}
	\label{ex:pf_thm_1}
	Consider $\mathcal A\in \mathbb K_2^{2\times 2}$, with 
	\[
	\mathcal A^{(1)} = \bigzhongkuohao{\begin{matrix}
			0 &  0 \\
			0 &0
	\end{matrix}},~ \mathcal A^{(2)} = \bigzhongkuohao{\begin{matrix}
	0 &  1 \\
	1 &0
\end{matrix}}.
	\]
	Then $\bcirc{\mathcal A}$ is
	\[
	\bcirc{\mathcal A} = \bigzhongkuohao{\begin{matrix}
			0 & 0 & 0 & 1\\
			0& 0& 1& 0\\
			0 & 1 & 0 & 0\\
			1& 0 & 0 & 0
	\end{matrix}}.
	\]
	Note that  $\mathcal A$ is an irreducible tubal matrix, but $\bcirc{\mathcal A}$ is a reducible matrix. Since $\mathcal A^{\top}$ is symmetric, its left and right t-eigenvectors are the same.  
	
	$\mathcal A$ then has four t-eigenvalues: $1$ (with multiplicity $2$) and $-1$ (with multiplicity $2$). Thus $\rho(\mathcal A)=1>0$ is an t-eigenvalue. The t-eigenvectors are
	\begin{align*}
&	A ~{\rm with} ~\mathbf a_1 = [0, 1], \mathbf a_2 = [-1,0];~ &B {\rm~with}~ \mathbf b_1 = [1,0], \mathbf b_2 = [0 -1];\\
& C~{\rm with}~\mathbf c_1 = [1,0],\mathbf c_2 = [0,1]; ~&D~{\rm with}~\mathbf d_1 =[0,1]; \mathbf d_2 = [1,0].
	\end{align*}
Here $A$ and $B$ correspond to $-1$, while $C$ and $D$ correspond to $\rho(\mathcal A)=1$. Every linear combination of $A$ and $B$ is also a t-eigenvector of $-1$, and every linear combination of $C$ and $D$ is also a t-eigenvector of $1$. 

From the structure of $C$ and $D$, we can see that  $C$ and $D$ are positive; moreover,  all the nonnegative t-eigenvectors must be positive. These illustrate Items 3 and 4 of Theorem \ref{th:pf_thm_irreducible_tubal_matrix}.
These also tell us  that the positive t-eigenvectors corresponding to $\rho(\mathcal A)$ might not be unique. 

Besides $C$ and $D$, we can also see that $A$ and $B$ do not contain zero tubal scalars, and any linear combination of $A$ and $B$ does not contain zero tubal scalars either. This confirms Item 5 of Theorem \ref{th:pf_thm_irreducible_tubal_matrix}. 

From again the structure of $C$ and $D$, we can check that any linear combination of $C$ and $D$ is either positive, or does not belong to $\mathbb K^2_{2+}$, as proved in Item 6 of Theorem \ref{th:pf_thm_irreducible_tubal_matrix}.

To show Item 7 of Theorem \ref{th:pf_thm_irreducible_tubal_matrix}, let $C=\alpha A + \beta B$ where $\alpha,\beta \in \mathbb R$. It is clearly seen that if $\mathbf c_1$ is strongly positive, then $\mathbf c_2$ cannot be; if $\mathbf c_2$ is strongly positive, then $\mathbf c_1$ cannot be. While any positive linear combination of $C$ and $D$ is a strongly positive t-eigenvector. Thus any strongly positive t-eigenvector can only correspond to $\rho(\mathcal A)$. 
\end{example}

We then discuss the differences between Theorem \ref{th:pf_thm_irreducible_tubal_matrix} and PF theorem for nonnegative irreducible matrices. Recall the PF theorem:
\begin{theorem}
	\label{th:pf_thm_nonnegative_irreducible_matrix}
	Let $A\in\mathbb R^{n\times n}$ be nonnegative irreducible. Then
	\begin{enumerate}
		\item $\rho(A)>0$ is an eigenvalue;
		\item There is a positive eigenvector corresponding to $\rho(A)$;
		\item If $\lambda$ is an eigenvalue with a nonnegative eigenvector, then $\lambda=\rho(A)$;
		\item $\rho(A)$ is a simple root.
	\end{enumerate}
\end{theorem}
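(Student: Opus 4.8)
The plan is to reduce everything to the Perron theorem for strictly positive matrices, paralleling the tubal argument built around Theorem~\ref{th:irreducible}. First I would record the matrix analogue of that theorem: since $A\ge 0$ is irreducible, the matrix $B:=(A+I)^{n-1}$ is entrywise strictly positive. One proves this exactly as in the proof of Theorem~\ref{th:irreducible} specialized to $p=1$: for any $x\ge 0$ with $x\neq 0$, irreducibility forces the number of zero coordinates of $(A+I)^k x$ to strictly decrease until it vanishes, so $(A+I)^{n-1}x>0$; applying this to each standard basis vector $e_j$ shows every column of $B$ is strictly positive, i.e. $B>0$.

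Next I would invoke the Perron theorem for the strictly positive matrix $B$: its spectral radius $\rho(B)>0$ is an algebraically simple eigenvalue, it is the unique eigenvalue of maximal modulus, it admits a strictly positive eigenvector $x$, and every nonnegative eigenvector of $B$ corresponds to $\rho(B)$. If one prefers not to assume Perron's theorem, it can be obtained from the Collatz--Wielandt characterization $\rho(B)=\max_{x\ge 0,\,x\neq 0}\ \min_{i:\,x_i>0}(Bx)_i/x_i$ together with a Brouwer fixed-point argument for the map $x\mapsto Bx/\|Bx\|_1$ on the simplex.

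Then I would transfer the conclusions from $B$ back to $A$ through the spectral mapping $\lambda\mapsto(\lambda+1)^{n-1}$. Because $A+I$ is nonnegative and $\rho(A)$ is an eigenvalue of $A$, we have $\rho(A+I)=\rho(A)+1$, hence $\rho(B)=(\rho(A)+1)^{n-1}$. Every eigenpair $(\lambda,x)$ of $A$ is an eigenpair $((\lambda+1)^{n-1},x)$ of $B$, with matching eigenspaces. The Perron vector $x>0$ of $B$ is therefore an eigenvector of $A$, say $Ax=\mu x$; since $A\ge 0$ and $x>0$ force $\mu\ge 0$, while $(\mu+1)^{n-1}=\rho(B)=(\rho(A)+1)^{n-1}$, we conclude $\mu=\rho(A)$. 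This gives Item~2 (a positive eigenvector for $\rho(A)$), and, together with the fact that an irreducible nonnegative matrix (for $n\ge 2$) has a strongly connected digraph and hence a cycle, so $\mathrm{tr}(A^k)>0$ for some $k$ and $A$ is not nilpotent, it gives Item~1, namely $\rho(A)>0$.

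For Item~3 I would use the left Perron vector: applying Item~2 to $A^{\top}$, which is again nonnegative irreducible, yields $w>0$ with $w^{\top}A=\rho(A)w^{\top}$. If $Ay=\lambda y$ with $y\ge 0$, $y\neq 0$, then $\rho(A)\,w^{\top}y=w^{\top}Ay=\lambda\,w^{\top}y$, and $w^{\top}y>0$ forces $\lambda=\rho(A)$. Finally, for Item~4 I would read simplicity off $B$: by the spectral mapping the algebraic multiplicity of $\rho(B)$ in $B$ equals the total algebraic multiplicity in $A$ of all eigenvalues $\lambda$ satisfying $(\lambda+1)^{n-1}=\rho(B)$; since $\rho(B)$ is a simple eigenvalue of the positive matrix $B$, this total is $1$, so $\rho(A)$ is the only such eigenvalue and is algebraically simple. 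I expect Item~4 to be the main obstacle: the simplicity of $\rho(B)$ is the genuinely nontrivial part of Perron's theorem, and the spectral-mapping step requires care to ensure no other eigenvalue of $A$ is sent to $\rho(B)$ — which is exactly what the simplicity of $\rho(B)$ supplies.
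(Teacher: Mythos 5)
The paper does not actually prove this statement: Theorem~\ref{th:pf_thm_nonnegative_irreducible_matrix} is the classical Perron--Frobenius theorem, recalled without proof solely so that its conclusions can be compared with those of Theorem~\ref{th:pf_thm_irreducible_tubal_matrix}. There is therefore no in-paper argument to match yours against, and I can only assess your proposal on its own terms. Your route is the standard reduction of Frobenius's theorem to Perron's theorem for positive matrices via $B=(A+I)^{n-1}>0$, and it is sound in outline; as you observe, the positivity of $B$ is exactly the $p=1$ specialization of the paper's Theorem~\ref{th:irreducible}.

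Two steps need tightening. First, your justification of $\rho(B)=(\rho(A)+1)^{n-1}$ invokes the fact that $\rho(A)$ is an eigenvalue of $A$, which is most of Item~1 --- the thing being proved. You must either import the weak Perron--Frobenius theorem for general nonnegative matrices (as the paper itself does in proving Theorem~\ref{th:weak_pf_thm_tubal_matrix}), or close the loop internally: obtain right and left Perron vectors $x,w>0$ of $B$ and $B^{\top}$, show they are eigenvectors of $A$ and $A^{\top}$ with eigenvalues $\mu,\nu\ge 0$, and then for any eigenpair $Ay=\lambda y$ use $|\lambda|\,|y|=|Ay|\le A|y|$ componentwise to get $|\lambda|\,w^{\top}|y|\le w^{\top}A|y|=\nu\,w^{\top}|y|$ with $w^{\top}|y|>0$, whence $\rho(A)\le\nu\le\rho(A)$ and likewise $\mu=\nu=\rho(A)$. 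Second, ``with matching eigenspaces'' is not automatic: an eigenvector of $B$ need not be an eigenvector of $A$ in general. What saves you is precisely that $\rho(B)$ is simple, so its eigenspace is one-dimensional and, being invariant under $A$ (which commutes with $B$), is spanned by an eigenvector of $A$. With these two repairs the argument --- including the multiplicity count for Item~4 via the spectral mapping $\lambda\mapsto(\lambda+1)^{n-1}$ --- is complete, up to the degenerate case $n=1$, $A=0$, which your cycle argument already sets aside.
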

Comparing Theorem \ref{th:pf_thm_irreducible_tubal_matrix} with Theorem \ref{th:pf_thm_nonnegative_irreducible_matrix}, we see that Items 1 and 2 of Theorem \ref{th:pf_thm_nonnegative_irreducible_matrix} are inherited by nonnegative irreducible tubal matrices. Item 3 of Theorem \ref{th:pf_thm_nonnegative_irreducible_matrix} cannot be completely generalized to the tubal matrix case, where Item 7 of Theorem \ref{th:pf_thm_irreducible_tubal_matrix} gives a weaker version. Item 4 of Theorem \ref{th:pf_thm_nonnegative_irreducible_matrix} does not hold for tubal matrices, as illustrated in Example \ref{ex:pf_thm_1}; what is worse is that $\rho(\mathcal A)$ may have t-eigenvectors that are neither positive nor negative. The only thing currently we can make sure is that the real t-eigenvectors corresponding to $\rho(\mathcal A)$ is either positive, or cannot contain nonnegative tubal scalars, as shown in Item 6 of Theorem \ref{th:pf_thm_irreducible_tubal_matrix}. 


Theorem \ref{th:pf_thm_irreducible_tubal_matrix} can be enhanced if there is an additionally relatively mild assumption is satisfied. 
\begin{theorem}[Enhanced PF theorem]
	\label{th:pf_thm_irreducible_tubal_matrix_enhanced}
	Let  $\mathcal A\in \Knnpp$ be a nonnegative irreducible tubal matrix. If there exists a strongly positive tubal scalar $\mathbf a_{i,j}$ in $\mathcal A$, Then
	\begin{enumerate}
		\item $\rho(\mathcal A)>0$ is a t-eigenvalue;
				\item Every nonzero nonnegative t-eigenvector must be strongly positive; 
		\item There is a strongly positive t-eigenvector corresponding to $\rho(\mathcal A)$; such a strongly positive t-eigenvector is unique up to a multiplicative constant; 
		\item If $\lambda$ is a t-eigenvalue with a nonzero nonneagtive t-eigenvector, then $\lambda = \rho(\mathcal A)$.
	\end{enumerate}
\end{theorem}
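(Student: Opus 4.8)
The four assertions split naturally. Items~1 and~4 will follow almost immediately from Theorem~\ref{th:pf_thm_irreducible_tubal_matrix} once the central new statement, Item~2, is in hand, so the plan is to prove Item~2 first and deduce the rest from it. The single extra ingredient compared with Theorem~\ref{th:pf_thm_irreducible_tubal_matrix} is the last clause of Proposition~\ref{prop:t_prod_nonnegative_tubal_scalars}: the t-product of a strongly positive tubal scalar with a positive one is again strongly positive. This is precisely the mechanism by which the presence of a single strongly positive entry $\mathbf a_{i_0,j_0}\in\Kpppp$ forces strong positivity to spread across a whole nonnegative t-eigenvector.

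For Item~2, let $X=(\mathbf x_i)\in\Knpp$, $X\neq\boldsymbol{0}$, satisfy $\mathcal A\tprod X=\lambda X$. First I would invoke Item~3 of Theorem~\ref{th:pf_thm_irreducible_tubal_matrix} to conclude that $X\in\Knppp$ (every $\mathbf x_i$ is positive), the accompanying argument also giving $\lambda>0$. Now set $S:=\{\, i\in[n]\mid \mathbf x_i\in\Kpppp\,\}$, the index set of the strongly positive components. Reading off the $i_0$-th component of $\mathcal A\tprod X=\lambda X$, the summand $\mathbf a_{i_0,j_0}\tprod\mathbf x_{j_0}$ is strongly positive by Proposition~\ref{prop:t_prod_nonnegative_tubal_scalars}, while the remaining summands are nonnegative; hence $\lambda\mathbf x_{i_0}\in\Kpppp$, and since $\lambda>0$ we get $i_0\in S$, so $S\neq\emptyset$. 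The propagation step is the same computation on an arbitrary row: if $i\notin S$ yet there is some $j\in S$ with $\mathbf a_{i,j}\neq\boldsymbol{0}$, then $\mathbf a_{i,j}\tprod\mathbf x_j$ is strongly positive and forces $\mathbf x_i\in\Kpppp$, contradicting $i\notin S$. Therefore $\mathbf a_{i,j}=\boldsymbol{0}$ for every $i\notin S$ and every $j\in S$. If $S\neq[n]$, then $I:=[n]\setminus S$ is a nonempty proper subset witnessing the reducibility of $\mathcal A$ in the sense of Definition~\ref{def:irreducible_tubal_matrices}, contradicting irreducibility. Hence $S=[n]$ and $X\in\Knpppp$.

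With Item~2 available, the remaining items are short. Item~1 is just Items~1--2 of Theorem~\ref{th:pf_thm_irreducible_tubal_matrix}. For existence in Item~3, Item~4 of Theorem~\ref{th:pf_thm_irreducible_tubal_matrix} supplies a positive t-eigenvector for $\rho(\mathcal A)$, which Item~2 upgrades to a strongly positive one. For uniqueness, I would take two strongly positive t-eigenvectors $X,Z$ for $\rho(\mathcal A)$, set $\delta:=\min_{i,k}\mathbf z_i^{(k)}/\mathbf x_i^{(k)}>0$, and consider $W:=Z-\delta X$. By construction $W\in\Knpp$ and $W$ has at least one vanishing entry, so $W\notin\Knpppp$; but $\mathcal A\tprod W=\rho(\mathcal A)W$, so if $W\neq\boldsymbol{0}$ then Item~2 would force $W\in\Knpppp$, a contradiction. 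Hence $W=\boldsymbol{0}$ and $Z=\delta X$, giving uniqueness up to the multiplicative constant $\delta$. Finally, for Item~4, a nonzero nonnegative t-eigenvector is strongly positive by Item~2, so Item~7 of Theorem~\ref{th:pf_thm_irreducible_tubal_matrix} yields $\lambda=\rho(\mathcal A)$.

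The one genuinely new step is Item~2, and within it the thing to get right is the reducibility bookkeeping: one must check that the off-diagonal zero pattern $\mathbf a_{i,j}=\boldsymbol{0}$ for $i\in[n]\setminus S$, $j\in S$ is exactly the pattern demanded by Definition~\ref{def:irreducible_tubal_matrices} with $I=[n]\setminus S$, and that $I$ is both nonempty (from $S\neq[n]$) and proper (from $i_0\in S$). It is worth emphasizing that, unlike the matrix case, $\rho(\mathcal A)$ need not be a simple t-eigenvalue (cf. Example~\ref{ex:pf_thm_1}); the uniqueness in Item~3 is therefore a statement only about \emph{strongly positive} t-eigenvectors, and the boundary argument closes precisely because the scaling constant $\delta$ is an ordinary real scalar rather than a tubal scalar.
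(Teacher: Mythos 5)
Your proposal is correct and follows essentially the same route as the paper: both establish Item~2 by using Item~3 of Theorem~\ref{th:pf_thm_irreducible_tubal_matrix} to get positivity, then let the single strongly positive entry seed strong positivity and show via Proposition~\ref{prop:t_prod_nonnegative_tubal_scalars} that the complement of the strongly-positive index set would witness reducibility, and both derive Items~1, 3, 4 from Theorem~\ref{th:pf_thm_irreducible_tubal_matrix} together with Item~2, with the uniqueness in Item~3 proved by the same extremal-scaling ($\sup$/$\min$-ratio) subtraction argument. The only differences are cosmetic: you make the nonemptiness of the strongly-positive index set explicit via the row $i_0$, where the paper states it more tersely.
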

\begin{proof}
	Item 1 has been proved in Theorem \ref{th:pf_thm_irreducible_tubal_matrix}.
	
	Let $X=\bigxiaokuohao{\mathbf x_i}\in\Knpp$, $X\neq 0$ with $\mathcal A\tprod X = \lambda X$. The proof of Item 2 of Theorem \ref{th:pf_thm_irreducible_tubal_matrix} shows that $\lambda>0$.  Item 3 of Theorem \ref{th:pf_thm_irreducible_tubal_matrix} shows that $X\in\Knppp$. We then show that $X\in \Knpppp$ in the current setting. Since $X\in \Knppp$ and there exists an $\mathbf a_{i,j}\in \Kpppp$, Proposition \ref{prop:t_prod_nonnegative_tubal_scalars} shows that there exists an $\mathbf x_k\in\Kpppp$. Therefore, let $I:= \bigdakuohao{ i \mid  \mathbf x_i \in \Kppp\setminus \Kpppp   }$ and $\bar I = [n]\setminus I$. Then $\bar I \neq \emptyset$.    We will prove that $I=\emptyset$. If not, consider   $i\in I$,
	\[
	\sum^n_{j=1}\mathbf a_{i,j}\tprod\mathbf x_j = \lambda\mathbf x_i, \forall i\in I.
	\]
	Since when $j\in \bar I$, $\mathbf x_j\in \Kpppp$;  if $\mathbf a_{i,j}\neq \boldsymbol{0}$, then Proposition \ref{prop:t_prod_nonnegative_tubal_scalars} shows that $\mathbf a_{i,j}\tprod \mathbf x_j\in\Kpppp$, and so  the LHS above is also in $\Kpppp$,  namely, $\mathbf x_i\in \Kpppp$, which contradicts the definition of $I$. Thus the only case can happen is that 
	\[
	\mathbf a_{i,j}=\boldsymbol{0},~\forall i\in I,\forall j\in \bar I. 
	\]
	However, this also leads to a contradiction. Thus, $I=\emptyset$, and so $X\in\Knpppp$.
	
To show	Item 3, note that the existence of a strongly positive t-eigenvector follows from Item 4 of Theorem \ref{th:pf_thm_irreducible_tubal_matrix} and Item 2 above.  Assume that $\mathcal A\tprod X = \rho(\mathcal A) X$ and $\mathcal A\tprod Y = \rho(\mathcal A) Y$ with $X,Y\in \Knpppp$, and $X\neq \alpha Y$, $\alpha >0$.  Denote $\delta_{X,Y}$ the same as that in the proof of Item 7 of Theorem \ref{th:pf_thm_irreducible_tubal_matrix}. Then $\delta_{X,Y}>0$. Denote $Z:= Y- \delta_{X,Y} X$. It is clear that $Z\in \Knpp\setminus\Knpppp$, namely, there is at least a tubal scalars $\mathbf z_i\not\in \Kpppp$. However, $Z$ is a nonnegative t-eigenvector corresponding to $\rho(\mathcal A)$. Thus Item 2 above tells us that the only case now can happen is $Z=\boldsymbol{0}$, i.e., $X=\alpha Y$ for some $\alpha>0$. This proves the uniqueness of the strongly positive t-eigenvector. 
	
	Item 4 follows from Item 7 of Theorem \ref{th:pf_thm_irreducible_tubal_matrix} and Item 2  above. 
\end{proof}
Comparing with the PF theorem for nonnegative irreducible matrices,  all the conclusions except the simplicity of $\rho(\mathcal A)$ hold now.

\section{Conclusions}\label{sec:con}
This paper treats third-order tensors as tubal matrices, and defines nonnegative/positive/strongly positive tubal scalars/vectors/matrices. The meaning of positivity here is different from its usual sense. For example, a positive tubal matrix might be very sparse. 
 Based on the nice properties of the t-product, we derive several properties for nonnegative tubal scalars/vectors/matrices. The irreducibility is defined with equivalent characterizations given. Some conclusions of the Perron-Frobenius theorem for nonnegative irreducible matrices have been generalized to nonnegative irreducible tubal matrices setting, while some fail. With an additionally relatively mild assumption, the obtained conclusions are enhanced, which are more close to the original PF theorem. 
 
{\footnotesize \section*{Acknowledgment} This work was supported by the National Natural Science Foundation of China Grant 11801100,   the Fok Ying Tong Education Foundation Grant 171094, and the special foundation for Guangxi Ba Gui Scholars.
}

   \bibliography{tensor,TensorCompletion,orth_tensor,random_tensor,t_prod,bib_tensor}
  \bibliographystyle{plain}

   \appendix

\end{document}